\documentclass[a4paper,12pt]{amsart}
  \usepackage{amssymb,amsthm}
 \setlength{\textwidth}{15cm}
\setlength{\oddsidemargin}{1cm}
\setlength{\evensidemargin}{1cm}
\setlength{\textheight}{21cm}
\setlength{\parskip}{2mm}
\setlength{\parindent}{0em}
\setlength{\headsep}{1.5cm}
\usepackage{graphicx,color}        
 
\usepackage{url} 

\newtheorem{theorem}{Theorem}[section]

\newtheorem{proposition}[theorem]{Proposition}

\theoremstyle{definition}
\newtheorem{definition}[theorem]{Definition}

\newtheorem{remark}[theorem]{Remark}

\def\r{\mathbb R}
\def\s{\mathbb S}
 \def\l{\mathbb L}
  
 \def\n{\mathbf n}


\begin{document}

\title{A characterization of rotational minimal surfaces  in the de Sitter space}
\author{Rafael L\'opez}
\address{ Departamento de Geometr\'{\i}a y Topolog\'{\i}a\\  Universidad de Granada.  18071 Granada, Spain}
\email{rcamino@ugr.es}
\subjclass{53C40,  53C42}
\keywords{de Sitter space, spacelike surface, timelike surface, minimal surface, catenary}
%
%

\begin{abstract} The generating curves of rotational minimal surfaces in   the de Sitter space $\s_1^3$ are characterized as solutions of a variational problem. It is proved that these curves are the critical points of the center of mass among all curves of $\s_1^2$ with prescribed endpoints and fixed length.  This extends the known properties of the catenary and the catenoid in the Euclidean setting. 
\end{abstract}

\maketitle

\section{Introduction and motivation}\label{s1}

 A non-degenerate surface in $3$-dimensional de Sitter space $\s_1^3$ is called a {\it minimal surface} if the mean curvature $H$ vanishes on the surface.  Although the equation $H=0$ is equivalent that the trace of the second fundamental form vanishes on the surface,  there is no a    variational interpretation of these surfaces such as it occurs in  Euclidean space. For spacelike surfaces of $\s_1^3$, the induced metric is Riemannian and it makes sense to define the area of a surface. In this case, a   surface of $\s_1^3$ with zero constant mean curvature is a   local maximizer of the area functional. If the surface is timelike, the induced metric has index one and there is not a notion of the area of a timelike surface.  Despite these differences with the Euclidean case, and in order to simplify the statements our results,  the terminology of minimal surfaces  to name a non-degenerate surface of $\s_1^3$ with zero constant mean curvature   is kept in this article.  

An interesting class of surfaces of $\s_1^3$ are the surfaces of revolution, also called, rotational surfaces. The family of surfaces of revolution in $\s_1^3$  is richer than the Euclidean setting because the rotational axis can be of three different causal character.  If, in addition, we investigate which of these surfaces are minimal, we need to distinguish  if the surface is spacelike and timelike. The  property that the surface is rotational implies that the equation $H=0$  is an ordinary differential equation given  in terms of the generating curve of the surface. This makes that the classification of rotational minimal surfaces of $\s_1^3$ can be obtained:  \cite{ak,li1,ll1,mo}; see also  \cite{br,li2,ya}. Recently,   rotational surfaces in $\s_1^3$ with a Weingarten relation between its curvatures have been investigated  because the Weingarten relation is described again  in terms of   ordinary differential equations \cite{de,ll2}.

If the classification of rotational minimal surfaces of $\s_1^3$ is deduced by solving ordinary differential equations, there is not known a special property of the generating curves. This contrasts to the Euclidean situation which it is  recalled giving a brief historical account.  Euler proved in 1744 that the catenoid is the only non-planar rotational minimal surface in  $\r^3$ being the catenary its generating curve.  This curve is also  the solution of the question that asks by the shape of a chain hanging by its own weight. This problem  attracted the interest of many scientistics some centuries above, beginning with Galileo. The answer, the catenary, was obtained some decades before to the Euler's result joining efforts of Hooke, Leibniz, Huygens and Bernouilli, among others. 

Coming back to the de Sitter space, and motivated by the property  that  the generating curve of the catenoid is variationally  characterized as the shape of a hanging chain, the purpose of this paper is to answer the following

\begin{quote}
{\it Question:} Can the generating curves of rotational minimal surfaces of  $\s_1^3$ be characterized from a variational viewpoint?
\end{quote}

   In $\r^2$, the problem of a hanging chain is equivalent to find a curve that minimizes the center of mass where the weight of the curve  is measured with respect to a straight-line of $\r^2$. The variational problem is posed among all curves of $\r^2$ with the same endpoints and the same length. Here the curve is viewed as an ideal incompressible chain of constant density.  In the $2$-dimensional  de Sitter space $\s_1^2$,  the   {\it catenary problem} asks what is the shape of a hanging chain in $\s_1^2$. It is then necessary to precise what it is the meaning of the `weight' of a curve of $\s_1^2$ indicating what is the reference to measure the center of mass.  If  we are able to solve this problem, the solution curve will be the analog catenary in $\s_1^2$. Now the natural way to proceed is to immerse $\s_1^2$ in the $3$-dimensional de Sitter space $\s_1^3$, and construct the surface of revolution whose generating curve is the above catenary. It is natural to ask if the mean curvature of this surface vanishes constantly on the surface. 

 A first problem that  appears is that we need to distinguish if   curves in $\s_1^2$ and surfaces in $\s_1^3$ are  spacelike or timelike.  A second  problem is how to define the center of mass  of a curve in $\s_1^2$. Here we need to precise the model of the de Sitter space. Let $\l^3$ be the Lorentz-Minkowski space, that is, the vector space $\r^3$ equipped with the Lorentzian metric $\langle,\rangle=(dx)^2+(dy)^2-(dz)^2$, where $(x,y,z)$ stand for the canonical coordinates of $\r^3$.  The $2$-dimensional  de Sitter space is defined as the set  $\s_1^2=\{(x,y,z)\in\r^3:x^2+y^2+z^2=1\}$ endowed with the induced metric from $\l^3$. The de Sitter space $\s_1^2$ is a Lorentzian surface of constant curvature equal to $1$. 
 
 In order to measure the center of mass of a curve of $\s_1^2$, we fix a reference plane $\Pi$ of $\l^3$ and the center of mass is measured using the distance to $\Pi$. Since in $\l^3$ there are three types of planes  depending on the causal character, the plane $\Pi$ can be  spacelike, timelike and degenerate.  Each one of these planes separates $\s_1^2$ in two domains and let us fix one of these domains, which it is denoted by $(\s_1^2)^+$. Let   $\gamma\colon [a,b]\to (\s_1^2)^+$, $\gamma=\gamma(t)$,  be a non-degenerate curve.  Assuming that the density is constantly $1$, the weight of $\gamma$ is defined by
$\int_a^b d(t)\, ds,$
where $d(t)$ is the distance between $\gamma(t)$ and $\Pi$ and $ds$ is the arc-length of $\gamma$. Thus the center of mass of $\gamma$ is 
$$\frac{\int_a^b d(t)\, ds}{\int_a^b \, ds}=\frac{\int_a^b d(t)|\gamma'(t)|\, dt}{\int_a^b |\gamma'(t)|\, dt},$$
where $|\gamma'(t)|=\sqrt{|\langle \gamma'(t),\gamma'(t)\rangle|}$. The variational problem posed in $\s_1^2$ consists to find the curve that minimizes the center of mass in the class of  all curves of $\s_1^2$ with prescribed endpoints and fixed length. Introducing a Lagrange multiplier due to the length constraint, the energy to minimize is 
\begin{equation}\label{ee}
\mathcal{E}_\Pi[\gamma]=\int_a^b (d(t)+\lambda)|\gamma'(t)|\, dt.
\end{equation}
 
 \begin{definition} \label{ds1} 
A critical point of $\mathcal{E}_\Pi$  is called a catenary with respect to the plane $\Pi$.
\end{definition}

 The following  step in our investigation  consists in   constructing a surface of revolution in $\s_1^3$ having a catenary as generating curve. For this, we immersed $\s_1^2$ into the $3$-dimensional de Sitter space $\s_1^3$ as follows. Let   $\l^4$ be the $4$-dimensional Lorentz-Minkowski space where the metric is $(dx_1)^2+(dx_2)^2-(dx_3)^2+(dx_4)^2$ and $(x_1,x_2,x_3,x_4)$ are Cartesian coordinates of $\r^4$. The $3$-dimensional de Sitter space $\s_1^3$ is defined by  
 $$\s_1^3=\{p=(x_1,x_2,x_3,x_4)\in\l^4:\langle p,p\rangle=1\}$$
 endowed with the induced metric from $\l^4$.  The space $\l^3$ is immersed in $\l^4$ by $(x,y,z)\mapsto (x,y,z,0)$. In particular, the $2$-dimensional de Sitter space $\s_1^2$ is immersed in $\s_1^3$ as $\s_1^2\times\{0\}$.   The steps to follow are now clear:
  \begin{enumerate}
  \item Let $\gamma$ be a catenary of $\s_1^2$ with respect to a plane $\Pi$.
  \item Let $L$ be the geodesic obtained by intersecting $\s_1^2$ with $\Pi$, $L=\s_1^2\cap\Pi$.
  \item Let $L\equiv L\times\{0\}\subset\s_1^3$.  Consider $\mathcal{G}$ the one-parameter group of rotations of $\s_1^3$ that leave pointwise fixed $L$. 
  \item Rotate $\gamma\equiv\gamma\times\{0\}$ about $\mathcal{G}$, obtaining a surface of revolution $S_\gamma$ of $\s_1^3$. 
  \item Compute the mean curvature $H$ of $S_\gamma$ and investigate if $H$ is $0$.
  \end{enumerate}
  
 As a consequence of all this work, the main result can be stated as follows.

\begin{theorem}\label{t-main} Let $M$ be a non-degenerate surface of revolution in $\s_1^3$ whose rotation axis is $L=\s_1^2\cap \Pi$.  Then $M$ is a minimal surface if and only if its generating curve is a catenary with respect to $\Pi$. 
\end{theorem}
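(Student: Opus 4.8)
The plan is to prove the equivalence by computing, on the one hand, the Euler--Lagrange equation of the energy $\mathcal{E}_\Pi$ and, on the other hand, the condition $H=0$ for the surface $S_\gamma$, and then checking that the two resulting ordinary differential equations for the generating curve coincide. Since the causal character of $\Pi$ (spacelike, timelike or degenerate) changes both the one-parameter group $\mathcal{G}$ and the form of the distance $d$, I would organize the whole argument in these three cases, fixing in each case a unit (or null) normal $\n$ to $\Pi$ in $\l^3$ and adapted coordinates in which $\mathcal{G}$ acts as an elliptic, hyperbolic or parabolic rotation on the plane spanned by $(\n,0)$ and $e_4=(0,0,0,1)$ in $\l^4$.

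First I would treat the variational side. Writing $f=d+\lambda$, the functional $\mathcal{E}_\Pi[\gamma]=\int f\,ds$ is a weighted length, so a standard first-variation computation (taking a normal variation $\psi N$, with $N$ the unit normal of $\gamma$ in $\s_1^2$, and integrating by parts) gives the critical point equation
$$
f\,\kappa_g=-\langle\nabla f,N\rangle,
$$
where $\kappa_g$ is the geodesic curvature of $\gamma$ in $\s_1^2$ and $\nabla$ the intrinsic gradient; the sign factors coming from $\langle\gamma',\gamma'\rangle=\pm1$ and $\langle N,N\rangle=\mp1$ must be tracked, but they combine into the single relation above. As $\lambda$ is constant, $\nabla f=\nabla d$, so being a catenary is characterized by $(d+\lambda)\kappa_g=-\langle\nabla d,N\rangle$.

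Next I would treat the geometric side. I parametrize $S_\gamma$ by $\Psi(t,\theta)=R_\theta\,\gamma(t)$ with $R_\theta\in\mathcal{G}$. The orbits meet the profile orthogonally, so the first fundamental form is diagonal, $I=\langle\gamma',\gamma'\rangle\,dt^2+G\,d\theta^2$, where $G=\langle\Psi_\theta,\Psi_\theta\rangle$ is the square of the orbit radius. A key simplification is that the unit normal of $S_\gamma$ is $\n_S=R_\theta N$, because $\s_1^2$ sits totally geodesically in $\s_1^3$; this makes the second fundamental form diagonal too, with profile entry $\langle\gamma'',N\rangle$ (hence $\kappa_g$ up to sign) and orbit entry $\langle\Psi_{\theta\theta},\n_S\rangle$, which computes explicitly from $\n$ and the radius. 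Thus $H=0$ becomes $\frac{\langle\gamma'',N\rangle}{\langle\gamma',\gamma'\rangle}+\frac{\langle\Psi_{\theta\theta},\n_S\rangle}{G}=0$, an equation relating $\kappa_g$ to the logarithmic derivative of the radius in the normal direction.

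The heart of the proof, and the step I expect to be hardest, is recognizing that these two equations are identical. This rests on identifying the orbit radius with the distance: the orbit of $\gamma(t)$ lies in the plane spanned by $(\n,0)$ and $e_4$, and its radius equals $|\langle\gamma(t),\n\rangle|$, which is precisely the distance $d(t)$ from $\gamma(t)$ to $\Pi$. With this identification one expresses both $\langle\Psi_{\theta\theta},\n_S\rangle/G$ and $\langle\nabla d,N\rangle$ in the same ambient terms and verifies that $H=0$ reduces exactly to $(d+\lambda)\kappa_g=-\langle\nabla d,N\rangle$, the constant $\lambda$ being pinned down by the normalization built into the distance. The main obstacles will be the careful bookkeeping of causal signs across the three cases --- making sure the elliptic, hyperbolic and parabolic rotations, together with the two possible signatures of the induced metric on $S_\gamma$ (spacelike versus timelike surface), all lead to the same matched equation --- and handling the degenerate plane, where $\n$ is null and both the distance and the orthonormal frame must be replaced by their lightlike analogues.
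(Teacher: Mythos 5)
Your proposal is correct in substance and shares the paper's overall architecture---the theorem is proved case by case according to the causal character of $\Pi$, by matching the Euler--Lagrange equation of $\mathcal{E}_\Pi$ against the condition $H=0$---but your execution is genuinely different and more structural. The paper works entirely in the coordinates $\Psi(u,v)$: it derives the catenary equations \eqref{c11}, \eqref{c21}, \eqref{c41} from \eqref{sel}, writes down each rotation group and surface parametrization explicitly, computes $N$, $E$, $G$, $h_{11}$, $h_{22}$, substitutes $u''$ via \eqref{eqk}, and checks that $H$ vanishes exactly when the catenary equation holds with $\lambda=0$ (Theorems \ref{t13}, \ref{t23}, \ref{t43}). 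You replace these computations by three invariant facts: (i) the first variation of the weighted length $\int(d+\lambda)\,ds$ yields $(d+\lambda)\kappa_g=-\langle\nabla d,N\rangle$; this single equation does unify the paper's three characterizations \eqref{c12}, \eqref{c22}, \eqref{c42}, and the apparent sign discrepancy there ($+\langle\mathbf{n},Z\rangle$ versus $-\langle\mathbf{n},Y\rangle$ and $-\langle\mathbf{n},T\rangle$) is exactly absorbed by the fact that the ambient Lorentzian gradients of $z$, $y$, $y-z$ are $-Z$, $Y$, $T$ respectively, so your claim that the signs combine is right; (ii) since $\s_1^2\times\{0\}$ is totally geodesic in $\s_1^3$ and the orbit direction is orthogonal to it, the surface normal is the rotated curve normal and both fundamental forms are diagonal, so $H=0$ reads $h_{11}/E+h_{22}/G=0$ with $h_{11}/E=\pm\kappa_g$; (iii) the orbit through $\gamma(t)$ has squared radius $G=d(t)^2$ (one checks $G=\sinh(u)^2=z^2$, $G=\sin(t)^2\cosh(u)^2=y^2$ and $G=(\sinh(u)-\cosh(u)\sin(t))^2=(y-z)^2$ in the three cases), which makes $h_{22}/G$ equal to the catenary right-hand side with denominator $d$ rather than $d+\lambda$. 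Point (iii) is the real gain of your approach: it explains conceptually why minimality forces $\lambda=0$, something the paper only observes a posteriori from the computation (cf.\ the remark following Theorem \ref{t13}); your phrase about $\lambda$ being ``pinned down'' is precisely this, and it is needed because the statement's ``catenary'' must be read with $\lambda=0$. What remains for you to fill in is exactly what you flag: the sign bookkeeping across the spacelike/timelike possibilities for $\gamma$ and for the surface, and the parabolic case, where $T$ is null and tangent to $\Pi_{y-z}$ and $d$ is the paper's convention $|y-z|$ rather than a genuine Lorentzian distance; the paper's explicit formulas confirm that all of this works out, so there is no gap, only routine detail.
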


This theorem provides a variational characterization of the generating curves of rotational minimal surfaces and it   shows that our approach to catenaries  in the de Sitter space  extends the classical notions of Euclidean catenary and catenoid. 
 
 The proof of Theorem \ref{t-main} is separated in three cases depending in   the three types of surfaces of revolution of $\s_1^3$. Within a more general context, these surfaces are  described in \cite{cd}. To be precise, fix a $2$-plane $P^2\subset \l^4$ and consider the one-parameter group $\mathcal{G}$ of isometries of $\l^4$ that leaves $P^2$  pointwise fixed. Let $P^3$ be a $3$-dimensional subspace of $\l^4$ such that $P^2\subset P^3$ and consider $\gamma$  a non-degenerate curve in $\s_1^3\cap (P^3-P^2)$. The surface  $S_\gamma$  obtained by acting $\mathcal{G}$ on $\gamma$   is called a surface of revolution of $\s_1^3$ and $\gamma$ is called its generating curve. This surface is said to be spherical (resp. hyperbolic, parabolic) if the induced metric on $P$ from $\l^4$ is Riemannian (resp. Lorentzian, degenerate).  Explicit expressions of the groups $\mathcal{G}$ and of the parametrizations of the rotational surfaces will be described in the following sections: spherical case (Section \ref{sec2}), hyperbolic case (Section \ref{sec3}) and parabolic case (Section \ref{sec4}).  

Following the above scheme, in each one of the three sections, it is defined the energy $\mathcal{E}_\Pi$ which depends on the distance to the reference plane $\Pi\subset\s_1^2$. Catenaries will be obtained as solutions of the Euler-Lagrange equation of these energies (Theorems \ref{t11}, \ref{t21} and \ref{t41}). Next, catenaries will be also characterized   as solutions of a prescribing curvature equation involving the distance to the plane $\Pi$ and the angle that make the principal normal vector of the curve with the unit vector field orthogonal to $\Pi$ (Theorems \ref{t12}, \ref{t22}  and \ref{t42}). Finally and answering to the question proposed in this paper, we prove Theorem \ref{t-main} in each one of these cases (Theorems \ref{t13}, \ref{t23}  and \ref{t43}).  A last Section \ref{sec5} will investigate the concept of intrinsic catenary where the center of mass is calculated using the (intrinsic) distance of $\s_1^2$.

\section{The catenary problem: spherical case}\label{sec2}

The first case to investigate are the catenaries of $\s_1^2$ where the center of mass is measured with respect to a spacelike plane. Without loss of generality,  we can assume that this plane is the plane $\Pi_{xy}$  of equation $z=0$. The catenary problem  consists into find the shape of a curve contained in $(\s_1^2)_{xy}^+=\{(x,y,z)\in \s_1^2:z>0\}$  whose shape is obtained as a critical point of the center of mass with respect to $\Pi_{xy}$.  The distance of a point $(x,y,z)\in(\s_1^2)_{xy}^+$ to $\Pi$ is $z$.

In order to obtain a manageable expression of this energy, let  us parametrize the de Sitter space  $\s_1^2$   by $\Psi\colon\r^2\to\s_1^2$, where
$$\Psi(u,v)=(\cosh(u)\cos{v},\cosh(u)\sin{v},\sinh(u)).$$
Let $\gamma\colon [a,b]\to(\s_1^2)_{xy}^+$ be a non-degenerate curve  and let  $\gamma(t)=\Psi(u(t),v(t))$. Then $\langle\gamma'(t),\gamma'(t)\rangle=v'^2\cosh(u)^2-u'^2$. If $\gamma$ is spacelike (resp. timelike) this number is positive (resp. negative). Along this paper, we will write   $\epsilon(v'^2\cosh(u)^2-u'^2)>0$ where $\epsilon=1$ (resp.  $\epsilon=-1$) if $\gamma$ is spacelike (resp. timelike). Then $|\gamma'(t)|=\sqrt{\epsilon(v'^2\cosh(u)^2-u'^2)}$ and $|\gamma'(t)|^2=\epsilon\langle\gamma'(t),\gamma'(t)\rangle$.

The distance of $\gamma(t)$ to $\Pi_{xy}$ is $\sinh u(t)$. According to \eqref{ee},  the energy of the  catenary problem with respect to the plane $\Pi_{xy}$ is 
\begin{equation*}
\mathcal{E}_{xy}[\gamma] =\int_a^b (\sinh(u)+\lambda) \sqrt{\epsilon(v'^2\cosh(u)^2-u'^2)}\, dt,
\end{equation*}
where $\lambda$ is a Lagrange multiplier due to the length of the curve being constant.

We will see that the critical points of $\mathcal{E}_{xy}$ will be expressed in terms of the curvature $\kappa$ of $\gamma$. Here $\kappa$ is the curvature of $\gamma$  viewed as a curve in the space $\s_1^2$, or in other words, $\kappa$ is the geodesic curvature of $\gamma$. For the rest of  computations of this paper, it is necessary to have an expression of $\kappa$ when $\gamma$ is written as $\gamma(t)=\Psi(u(t),v(t))$.

\begin{proposition} Let $\gamma(t)=\Psi(u(t),v(t))$ be a non-degenerate curve in $\s_1^2$. Then its (geodesic) curvature $\kappa$ is 
\begin{equation}\label{eqk}
\kappa=\epsilon
\frac{v'(v'^2\sinh(u)\cosh(u)^2-2u'^2\sinh(u))-\cosh(u)(u'v''-v'u'')}{|\gamma'(t)|^3}.
\end{equation}
\end{proposition}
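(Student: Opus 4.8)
The plan is to compute the geodesic curvature extrinsically, exploiting that in the model $\s_1^2=\{p\in\l^3:\langle p,p\rangle=1\}$ the position vector is the unit normal. First I would observe that at $\gamma(t)$ the vector $\gamma$ itself is a (spacelike) unit normal to $\s_1^2$ in $\l^3$, since $\langle\gamma,\gamma\rangle=1$ forces $\langle\gamma,\Psi_u\rangle=\langle\gamma,\Psi_v\rangle=0$. Hence the intrinsic covariant derivative is the tangential projection of the ambient acceleration,
$$\nabla_{\gamma'}\gamma'=\gamma''-\langle\gamma'',\gamma\rangle\,\gamma=\gamma''+\langle\gamma',\gamma'\rangle\,\gamma,$$
where I used $\langle\gamma'',\gamma\rangle=-\langle\gamma',\gamma'\rangle$ (differentiate $\langle\gamma',\gamma\rangle=0$). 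To read off the component of this vector orthogonal to $\gamma'$ inside the tangent plane, introduce $\eta=\gamma\times_{\l}\gamma'$, the Lorentzian cross product characterized by $\langle a\times_{\l}b,c\rangle=\det(a,b,c)$. By construction $\eta$ is orthogonal to both $\gamma$ and $\gamma'$, hence tangent to $\s_1^2$ and normal to $\gamma$ inside the surface, and the cross-product norm identity gives $\langle\eta,\eta\rangle=-\langle\gamma,\gamma\rangle\langle\gamma',\gamma'\rangle+\langle\gamma,\gamma'\rangle^2=-\epsilon|\gamma'|^2$, so $|\eta|=|\gamma'|$.

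With this frame the geodesic curvature is $\kappa=\epsilon\,\langle\nabla_{\gamma'}\gamma',\eta\rangle/|\gamma'|^3$, the orientation (i.e.\ the ordering in the cross product) being chosen to make the global factor exactly $+\epsilon$. Since $\langle\gamma,\eta\rangle=0$, the term $\langle\gamma',\gamma'\rangle\gamma$ drops out, and the defining property of the cross product collapses the inner product to a determinant:
$$\langle\nabla_{\gamma'}\gamma',\eta\rangle=\langle\gamma'',\gamma\times_{\l}\gamma'\rangle=\det(\gamma,\gamma',\gamma'').$$
Thus the statement reduces to the identity $\kappa=\epsilon\,\det(\gamma,\gamma',\gamma'')/|\gamma'|^3$. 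As a consistency check, note this expression is invariant under reparametrization, since both numerator and $|\gamma'|^3$ scale by $\lambda^3$ under $t\mapsto\lambda t$, as they must because $\kappa$ is geometric.

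It then remains to evaluate $\det(\gamma,\gamma',\gamma'')$ for $\gamma=\Psi(u(t),v(t))$. Rather than differentiate the three components by brute force, I would expand everything in the moving frame $\{\Psi_u,\Psi_v,\Psi\}$. Writing $\gamma'=u'\Psi_u+v'\Psi_v$ and $\gamma''=u''\Psi_u+v''\Psi_v+u'^2\Psi_{uu}+2u'v'\Psi_{uv}+v'^2\Psi_{vv}$, the computation rests on the three elementary identities $\Psi_{uu}=\Psi$, $\Psi_{uv}=\tanh(u)\,\Psi_v$, and $\Psi_{vv}=-\cosh(u)^2\,\Psi+\sinh(u)\cosh(u)\,\Psi_u$, together with the normalization $\det(\Psi,\Psi_u,\Psi_v)=-\cosh(u)$. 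Because the $\Psi$-component of $\gamma''$ is annihilated against the $\gamma=\Psi$ occupying the first slot, only the $\Psi_u$- and $\Psi_v$-components survive, and multilinearity yields
$$\det(\gamma,\gamma',\gamma'')=\bigl[u'(v''+2u'v'\tanh u)-v'(u''+v'^2\sinh u\cosh u)\bigr]\det(\Psi,\Psi_u,\Psi_v),$$
which after substituting $\det(\Psi,\Psi_u,\Psi_v)=-\cosh u$ is precisely the numerator of \eqref{eqk}. The only genuine obstacle is bookkeeping: carrying out this expansion without sign errors, and at the outset pinning down the orientation convention for $\eta$ so that the global factor emerges as $+\epsilon$ rather than $-\epsilon$; everything else is forced.
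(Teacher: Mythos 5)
Your proof is correct, and its core reduction is the same as the paper's: both rest on the identity $\kappa=\epsilon\,\det(\gamma,\gamma',\gamma'')/|\gamma'|^3$ and then evaluate that determinant for $\gamma(t)=\Psi(u(t),v(t))$. The difference lies in how the two halves are treated. The paper \emph{asserts} the determinant formula (writing $\kappa=\epsilon\langle\gamma'',\gamma'\times\xi(\gamma)\rangle/|\gamma'|^3$ with $\xi(p)=-p$, and noting $\gamma'\times\xi(\gamma)=\gamma\times\gamma'$) and dismisses the evaluation as a ``straightforward calculation'', i.e.\ direct differentiation of the three coordinate functions. You do the opposite: you \emph{derive} the formula the paper takes for granted, via the tangential projection $\nabla_{\gamma'}\gamma'=\gamma''+\langle\gamma',\gamma'\rangle\gamma$ and the frame vector $\eta=\gamma\times\gamma'$ with $\langle\eta,\eta\rangle=-\epsilon|\gamma'|^2$, and you then organize the evaluation through the moving frame $\{\Psi,\Psi_u,\Psi_v\}$ using the structure equations $\Psi_{uu}=\Psi$, $\Psi_{uv}=\tanh(u)\Psi_v$, $\Psi_{vv}=-\cosh^2(u)\Psi+\sinh(u)\cosh(u)\Psi_u$ and $\det(\Psi,\Psi_u,\Psi_v)=-\cosh(u)$. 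I checked all of these identities and the final multilinear expansion; they are correct and reproduce the numerator of \eqref{eqk} exactly. What your route buys is robustness and transparency: the $\Psi$-component of $\gamma''$ is annihilated structurally (first slot of the determinant) rather than by coordinate cancellations, and the argument is self-contained where the paper's is not. Its only soft spot, which you flag yourself, is that the overall sign of $\kappa$ is an orientation convention; you fix it by decree so that the factor is $+\epsilon$, which matches the paper's (equally conventional) choice coming from $\xi(p)=-p$ and ${\bf n}=-\gamma\times\gamma'/|\gamma'|$.
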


\begin{proof} Let $\xi(p)=-p$, $p\in \s_1^2$, be the unit normal vector of $\s_1^2$. Then 
$$\kappa=\epsilon\frac{\langle\gamma'',\gamma'\times \xi(\gamma)\rangle}{|\gamma'(t)|^3}=\epsilon\frac{\mbox{det}(\gamma,\gamma',\gamma'')}{|\gamma'(t)|^3},$$
and \eqref{eqk} follows by a straightforward calculation.
\end{proof}

For further purposes, we need the following computations. First, 
\begin{equation}\label{k1}
\begin{split}\frac{d}{dt}\left(\frac{u'}{|\gamma'|}\right)&=\frac{\epsilon v'\cosh(u)}{|\gamma'|^3}(-u'^2v'\sinh(u)+\cosh(u)(v'u''-u'v''))\\
&=\frac{\epsilon v'\cosh(u)}{|\gamma'|^3}(u'^2v'\sinh(u)-v'^3\sinh(u)\cosh(u)^2+\epsilon \kappa|\gamma'|^3)\\
&=v'\cosh(u)\left(\kappa-\frac{v'\sinh(u)}{|\gamma'|}\right),
\end{split}
\end{equation}
where in the second identity we have replaced the expression $v'u''-u'v''$ in terms of $\kappa$ thanks to \eqref{eqk}. A similar computation yields
\begin{equation}\label{k2}
\begin{split}\frac{d}{dt}\left(\frac{v'\cosh(u)}{|\gamma'|}\right)&=u'\left(\kappa-\frac{v'\sinh(u)}{|\gamma'|}\right)\end{split}
\end{equation}

\begin{theorem} \label{t11}
Let $\gamma(t)=\Psi(u(t),v(t))$ be a non-degenerate curve in $(\s_1^2)_{xy}^+$. Then $\gamma$ is a  catenary with respect to $\Pi_{xy}$ if and only if its curvature $\kappa$ satisfies 
\begin{equation}\label{c11}
\kappa=-\frac{v'\cosh(u)^2}{(\sinh(u)+\lambda)|\gamma'|}.
\end{equation}
\end{theorem}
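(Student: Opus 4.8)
The plan is to compute the Euler--Lagrange equations of the energy $\mathcal{E}_{xy}$ and to show that, once the identities \eqref{k1} and \eqref{k2} are used, both of them collapse to the single curvature equation \eqref{c11}. Write the integrand as $F(u,v,u',v')=(\sinh(u)+\lambda)\,W$, where $W=|\gamma'|=\sqrt{\epsilon(v'^2\cosh(u)^2-u'^2)}$. Two structural features guide the argument. First, $F$ does not depend on $v$, so the Euler--Lagrange expression in $v$ is simply $E_v=-\frac{d}{dt}\partial_{v'}F$. Second, $F$ is positively homogeneous of degree one in $(u',v')$, so the functional is invariant under reparametrizations; by Euler's theorem this forces the identity $u'E_u+v'E_v=0$ to hold for every curve, where $E_u=\partial_uF-\frac{d}{dt}\partial_{u'}F$. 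Hence the two Euler--Lagrange expressions cannot be independent, and I expect each to be a scalar multiple of one and the same quantity.

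First I would record the partial derivatives $\partial_{u'}W=-\epsilon u'/W$, $\partial_{v'}W=\epsilon v'\cosh(u)^2/W$ and $\partial_uW=\epsilon v'^2\sinh(u)\cosh(u)/W$, from which $\partial_{u'}F=-\epsilon(\sinh(u)+\lambda)\,u'/W$, $\partial_{v'}F=\epsilon(\sinh(u)+\lambda)\cosh(u)\,\bigl(v'\cosh(u)/W\bigr)$ and $\partial_uF$ are immediate. The decisive step is to differentiate $\partial_{u'}F$ and $\partial_{v'}F$ in $t$ using \eqref{k1} and \eqref{k2} to evaluate $\frac{d}{dt}(u'/W)$ and $\frac{d}{dt}(v'\cosh(u)/W)$. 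This is exactly what those identities were prepared for, since it is the mechanism that introduces the geodesic curvature $\kappa$ into the equations.

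Carrying out the product rule and substituting, one checks that the contributions proportional to $(\sinh(u)+\lambda)\sinh(u)\cosh(u)\,v'/W$ cancel, and in $E_u$ the term $(\sinh(u)+\lambda)\,\epsilon v'^2\sinh(u)\cosh(u)/W$ coming from $\partial_uF$ cancels against its counterpart produced by $\frac{d}{dt}\partial_{u'}F$. After collecting the surviving terms I expect the two equations to take the factorised form
\begin{equation*}
E_u=\epsilon\,v'\cosh(u)\,Q,\qquad E_v=-\epsilon\,u'\cosh(u)\,Q,\qquad Q:=\frac{v'\cosh(u)^2}{W}+(\sinh(u)+\lambda)\,\kappa,
\end{equation*}
which is visibly consistent with the homogeneity relation $u'E_u+v'E_v=0$.

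Finally, $\gamma$ is a catenary, i.e. a critical point of $\mathcal{E}_{xy}$, exactly when $E_u=E_v=0$. Since $\cosh(u)>0$ and $\gamma$ is non-degenerate, one has $(u',v')\neq(0,0)$ at every point, so at least one of $v'\cosh(u)$, $u'\cosh(u)$ is nonzero; therefore $E_u=E_v=0$ if and only if $Q=0$. Solving $Q=0$ for $\kappa$ yields precisely \eqref{c11}, which establishes the equivalence. The only genuinely delicate point is the bookkeeping in differentiating $\partial_{u'}F$ and $\partial_{v'}F$: the product rule must be organised so that the $\sinh\cosh$-terms cancel and the remainder regroups into the factor $Q$. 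The identities \eqref{k1}--\eqref{k2} are what make this cancellation transparent and reduce the whole computation to routine algebra.
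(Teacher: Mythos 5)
Your proposal is correct and takes essentially the same route as the paper: both compute the two Euler--Lagrange equations of $\mathcal{E}_{xy}$, use \eqref{k1} and \eqref{k2} to introduce $\kappa$, factor the equations as $v'\cosh(u)\,Q=0$ and $u'\cosh(u)\,Q=0$ with $Q=\frac{v'\cosh(u)^2}{|\gamma'|}+(\sinh(u)+\lambda)\kappa$, and conclude $Q=0$ since $u'$ and $v'$ cannot vanish simultaneously. Your extra observation that degree-one homogeneity forces $u'E_u+v'E_v=0$ (so the two equations are necessarily proportional) is a useful consistency check absent from the paper, but it does not change the argument.
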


\begin{proof} The Euler-Lagrange equations of $\mathcal{E}_{xy}$ are calculated using 
 \begin{equation}\label{sel}
\frac{\partial J}{\partial u}-\frac{d}{dt} \left(\frac{\partial J}{\partial u'}\right)=0,\quad 
 \frac{\partial J}{\partial v}-\frac{d}{dt} \left(\frac{\partial J}{\partial v'}\right)=0,
\end{equation}
where $J=J[u,v]$ is the integrand of $\mathcal{E}_{xy}$, 
$$J [u,v]=(\sinh(u)+\lambda) \sqrt{\epsilon(v'^2\cosh(u)^2-u'^2)}.$$
Equations \eqref{sel} are, respectively, 
$$\frac{ v'^2\cosh(u)\left(\cosh(u)^2+\sinh(u)(\sinh(u)+\lambda)\right)}{|\gamma'|}+(\sinh(u)+\lambda)\frac{d}{dt}\left(\frac{u'}{|\gamma'|}\right)=0.$$
$$\left(\cosh(u)^2+\sinh(u)(\sinh(u)+\lambda)\right)\cosh(u)\frac{u'v'}{|\gamma'|}+\sinh(u)\cosh(u) \frac{d}{dt}\left(\frac{v'\cosh(u)}{|\gamma'|}\right)=0.$$
 Using \eqref{k1} and \eqref{k2},  we obtain  
$$v'\cosh(u)\left(\frac{v'\cosh(u)^2}{|\gamma'|}+\kappa(\sinh(u)+\lambda)\right)=0.$$
$$u'\cosh(u)\left(\frac{v'\cosh(u)^2}{|\gamma'|}+\kappa(\sinh(u)+\lambda)\right)=0.$$
This yields the result because $u'$ and $v'$ cannot simultaneously vanish.
\end{proof}

\begin{remark} The case $v'=0$ is equivalent to $\kappa=0$ and $\gamma$ is a meridian $\gamma(t)=\Psi(t,v_0)$ for some constant $v_0\in\r$. This situation also occurs in the Euclidean case: see Remark \ref{rem2} below.
\end{remark}

Notice that the Lagrangian $J[u,v]$ does not depend on $v$. Thus the second equation of \eqref{sel} gives a first integration of $\gamma$, namely, 
$$\frac{\partial J}{\partial v}=\frac{\epsilon v'\cosh(u)^2(\sinh(u)+\lambda)}{|\gamma'|}=c,$$
for some non-zero constant $c\in\r$. Without loss of generality, we can assume that $\gamma(t)=\Psi(u(t),t)$. From the above equation, we deduce  
\begin{equation}\label{first}
u(t)=m\pm\frac{1}{c}\int^t\cosh(u)\sqrt{c^2-\epsilon\cosh(u)^2(\sinh(u)+\lambda)}\, dt,\quad c,m\in\r.
\end{equation}

We characterize the   catenaries with respect to $\Pi_{xy}$  in terms of the angle that makes the principal normal vector ${\bf n}$ of $\gamma$ with a vector field of $\l^3$.  The vector $\mathbf{n}$ is understood as a unitary tangent  of  $\s_1^2$ at $\gamma$ which is orthogonal to $\gamma'$.  On the other hand, the vector field of $\l^3$ is the unit vector field orthogonal to the plane $\Pi_{xy}$. Define  $Z=\partial_z\in\mathfrak{X}(\l^3)$.

\begin{theorem}\label{t12} Let $\gamma(t)=\Psi(u(t),v(t))$ be a non-degenerate curve in $(\s_1^2)_{xy}^+$. Then $\gamma$ is a catenary with respect to $\Pi_{xy}$ if and only if its curvature $\kappa$ satisfies 
\begin{equation}\label{c12}
\kappa(t)=\frac{\langle{\bf n}(t), Z\rangle}{d_{xy}(t)+\lambda},
\end{equation}
where $d_{xy}(t)$ is the distance of $\gamma(t)$ to the plane $\Pi_{xy}$.
\end{theorem}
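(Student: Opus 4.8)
The plan is to deduce Theorem \ref{t12} directly from the ODE characterization already obtained in Theorem \ref{t11}. Since $\gamma(t)\in(\s_1^2)_{xy}^+$ has distance $d_{xy}(t)=\sinh(u(t))$ to the plane $\Pi_{xy}$, equation \eqref{c12} is equivalent to $\langle\mathbf{n}(t),Z\rangle=-\frac{v'\cosh(u)^2}{|\gamma'|}$, and then comparison with \eqref{c11} finishes the argument. Thus the whole statement reduces to computing the scalar product $\langle\mathbf{n},Z\rangle$ explicitly in the parametrization $\gamma=\Psi(u,v)$.

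First I would write down the principal normal $\mathbf{n}$. Recall that $\xi(\gamma)=-\gamma$ is the unit normal of $\s_1^2$ and that $\langle\gamma,\gamma'\rangle=0$ because $\langle\gamma,\gamma\rangle=1$. Hence $\gamma\times\gamma'=\gamma'\times\xi(\gamma)$ is orthogonal both to $\gamma$ and to $\gamma'$, so it is tangent to $\s_1^2$ and orthogonal to $\gamma'$, that is, it points in the direction of $\mathbf{n}$. Using $\langle\gamma,\gamma\rangle=1$, $\langle\gamma,\gamma'\rangle=0$ and $\langle\gamma',\gamma'\rangle=\epsilon|\gamma'|^2$ one gets $\langle\gamma\times\gamma',\gamma\times\gamma'\rangle=-\epsilon|\gamma'|^2$, so that the unit principal normal is $\mathbf{n}=\pm(\gamma\times\gamma')/|\gamma'|$, the sign being the one dictated by the orientation used in the Proposition to express $\kappa$ through \eqref{eqk}.

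Next I would compute $\langle\mathbf{n},Z\rangle$. With the sign convention $\langle w,a\times b\rangle=\det(a,b,w)$ implicit in the Proposition, $\langle\mathbf{n},Z\rangle=\pm\det(\gamma,\gamma',Z)/|\gamma'|$. Taking $Z=(0,0,1)$ and expanding the determinant along the last row, only the $2\times2$ minor in the $x,y$ coordinates survives; after using $\cos^2 v+\sin^2 v=1$ this minor collapses to $v'\cosh(u)^2$, so $\det(\gamma,\gamma',Z)=v'\cosh(u)^2$. Therefore $\langle\mathbf{n},Z\rangle=\pm v'\cosh(u)^2/|\gamma'|$, and substituting into $\kappa=\langle\mathbf{n},Z\rangle/(d_{xy}+\lambda)$ reproduces \eqref{c11} precisely when the sign is negative.

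The only delicate point, and the one I would treat carefully, is the bookkeeping of signs: fixing the orientation of $\mathbf{n}$ consistently with the definition of the geodesic curvature $\kappa$ and with the determinant identity of the Proposition, so that the sign of $\langle\mathbf{n},Z\rangle$ comes out negative and matches the minus sign in \eqref{c11}. Everything else is a short and routine computation, and the equivalence with Theorem \ref{t11} makes the ``if and only if'' immediate.
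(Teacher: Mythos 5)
Your proposal is correct and follows essentially the same route as the paper: the paper likewise reduces \eqref{c12} to \eqref{c11} by computing $\langle\mathbf{n},Z\rangle=-\det(\gamma,\gamma',\partial_z)/|\gamma'|=-v'\cosh(u)^2/|\gamma'|$ and using $d_{xy}=\sinh(u)$. The only difference is that the paper removes your $\pm$ ambiguity at the outset by fixing the orientation $\mathbf{n}=\xi(\gamma)\times\gamma'/|\gamma'|=-\gamma\times\gamma'/|\gamma'|$, where $\xi(\gamma)=-\gamma$ is the unit normal of $\s_1^2$, which is precisely the choice consistent with the curvature convention \eqref{eqk}.
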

\begin{proof} 
Since the unit normal vector $\xi$ to $(\s_1^2)_{xy}^+$ along $\gamma$ is $\xi(\gamma)=-\gamma$,   the principal normal vector ${\n}$ is 
$${\bf n}=\frac{ \xi(\gamma)\times \gamma'}{|\gamma'|}=-\frac{\gamma\times\gamma'}{|\gamma'|}.$$
Thus
$$\langle{\bf n}(t),Z\rangle=-\frac{\mbox{det}(\gamma,\gamma',\partial_z)}{ |\gamma'|}= -\frac{v'\cosh(u)^2}{|\gamma'|}.$$
Since $d_{xy}=\sinh(u)$, it follows \eqref{c12} from \eqref{c11}.
\end{proof}

\begin{remark}\label{rem2}
Equation \eqref{c12} is analogous to the Euclidean case. In the catenary problem in $\r^2$,    the reference line is assumed to be the $x$-axis. The energy to minimize is   $[y]\mapsto \int_a^b (y+\lambda)\sqrt{1+y'^2}\, dx$ for curves $y=y(x)$, $x\in [a,b]$, $y(x)>0$. A critical point $y$ is characterized by the equation
\begin{equation}\label{euclideo}
\frac{y''}{1+y'^2}=\frac{1}{y+\lambda},
\end{equation}
 whose solution   is the catenary
\begin{equation}\label{cat}
y(x)=\frac{1}{c}\cosh(cx+a)-\lambda,\quad a,c\in\r, c>0.
\end{equation}
 The curvature of $y(x)$ is $y''/(1+y'^2)^{3/2}$ and the principal normal vector $\mathbf{n}$ is $(-y',1)/\sqrt{1+y'^2}$. If $W(x,y)=\partial_y$ is the unit vector field of $\r^2$ in the direction of the gravity,  then equation \eqref{euclideo} is equivalent to 
$$\kappa(x)=\frac{\langle \mathbf{n}(x),W\rangle}{d(x)+\lambda},$$
where $d(x)=y$ is the distance of $(x,y(x))$ to the $x$-axis. Notice that vertical straight-lines of $\r^2$ are solutions of this equation because $\kappa=0$ and $\langle{\bf n},W\rangle=0$.
\end{remark}

Once the catenaries of $\s_1^2$ with respect to the plane $\Pi_{xy}$ have been established, we  answer to the Question posed in the Introduction. We will rotate a catenary curve with respect to the plane $\Pi_{xy}$ about the geodesic $L_{xy}=\Pi_{xy}\cap\s_1^2$.  For this, let us immerse  $\s_1^2$ into $\s_1^3$ considering a Riemannian factor in the fourth coordinate such as it was explained in Section \ref{s1}.    Consider the geodesic $L_{xy}\subset\s_1^2\times\{0\}\subset\s_1^3$, which will be the rotation axis. The one-parametric group of rotations of $\s_1^3$ that leave pointwise the geodesic $L_{xy}$ is $\mathcal{G}_{xy}=\{\mathcal{R}_s^{xy}:s\in\r\}$, where 
 $$\mathcal{R}_s^{xy}= \left(\begin{array}{cccc} 1&0&0&0\\ 0&1&0&0\\ 0&0&\cosh(s)&\sinh(s)\\ 0&0& \sinh(s) &\cosh(s)\end{array}\right).$$
 Let $\gamma=\gamma(t)$ be a curve in $(\s_1^2)_{xy}^+\subset\s_1^3$. Let $S_\gamma^{xy}=\{\mathcal{R}^{xy}_s\cdot\gamma(t)\colon s\in\r, t\in [a,b]\}$ be the surface of revolution obtained by the orbit of $\gamma$ under the group $\mathcal{G}_{xy}$. Since the plane $\Pi_{xy}$ containing the rotation axis is spacelike, the surface of revolution is of spherical type according to the terminology of \cite{cd}.

 \begin{theorem}\label{t13}
  The surface of revolution $S_\gamma^{xy}$ is minimal if and only if $\gamma$ is a catenary  with respect to $\Pi_{xy}$  for the Lagrange multiplier $\lambda=0$.
 \end{theorem}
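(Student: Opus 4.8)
The plan is to parametrize $S_\gamma^{xy}$ explicitly and then reduce the computation of its mean curvature to a single point of the generating curve by exploiting the rotational symmetry. Writing $\gamma(t)=(\cosh u\cos v,\cosh u\sin v,\sinh u,0)$ and applying $\mathcal{R}_s^{xy}$, the surface is
\begin{equation*}
X(s,t)=(\cosh u\cos v,\ \cosh u\sin v,\ \sinh u\cosh s,\ \sinh u\sinh s).
\end{equation*}
Since each $\mathcal{R}_s^{xy}$ is an isometry of $\s_1^3$ carrying $S_\gamma^{xy}$ to itself, the mean curvature is constant along the orbits, so it suffices to evaluate $H$ at $s=0$, that is, along $\gamma$ itself.

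The key geometric observation I would use is that $\s_1^2\times\{0\}=\s_1^3\cap\{x_4=0\}$ is totally geodesic. Along $s=0$ one has $X_s(0,t)=(0,0,0,\sinh u)$, orthogonal to this slice, while $X_t(0,t)=\gamma'(t)$ is tangent to it; hence the unit normal $N$ of $S_\gamma^{xy}$ along $\gamma$ is orthogonal to $\partial_{x_4}$ and to $\gamma'$, forcing $N(0,t)=\mathbf{n}=-\gamma\times\gamma'/|\gamma'|$, the principal normal of $\gamma$ in $\s_1^2$ (up to sign). I would then compute the first fundamental form and find it diagonal: $E=\langle X_s,X_s\rangle=\sinh^2 u$, $F=\langle X_s,X_t\rangle=0$, and $G=\langle X_t,X_t\rangle=v'^2\cosh(u)^2-u'^2=\epsilon|\gamma'|^2$. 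A short check shows the second fundamental form is diagonal as well, since $X_{st}(0,t)=(0,0,0,u'\cosh u)$ is proportional to $\partial_{x_4}\perp\mathbf{n}$, so $f=\langle X_{st},N\rangle=0$. Consequently $2H=e/E+g/G$, and $S_\gamma^{xy}$ is minimal if and only if $eG+gE=0$.

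The two entries are read off at $s=0$. For the profile direction, $X_{tt}(0,t)=\gamma''$, so $g=\langle\gamma'',\mathbf{n}\rangle$; using $\mathbf{n}=-\gamma\times\gamma'/|\gamma'|$ together with \eqref{eqk} gives $g/G=-\kappa$, i.e. the normal curvature in the meridian direction equals the geodesic curvature of $\gamma$, as expected for a totally geodesic slice. For the rotational direction, $X_{ss}(0,t)=(0,0,\sinh u,0)=\sinh u\,Z$ with $Z=\partial_z$, so $e=\sinh u\,\langle\mathbf{n},Z\rangle$; the value $\langle\mathbf{n},Z\rangle=-v'\cosh(u)^2/|\gamma'|$ was already computed in the proof of Theorem \ref{t12}, whence $e/E=-v'\cosh(u)^2/(\sinh u\,|\gamma'|)$. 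Therefore $2H=-\kappa-v'\cosh(u)^2/(\sinh u\,|\gamma'|)$, so $H=0$ is precisely equation \eqref{c11} with $\lambda=0$, and comparison with Theorem \ref{t11} finishes the argument.

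I expect the only delicate point to be the Lorentzian bookkeeping of signs and causal character: one must verify that the formula $2H=e/E+g/G$ and the identification $N=\mathbf{n}$ hold uniformly for the spacelike ($\epsilon=1$) and timelike ($\epsilon=-1$) surfaces, keeping track of the $-1$ in the ambient metric attached to the $x_3$-coordinate. Since the minimality condition reduces to $eG+gE=0$, the overall sign of $N$ is irrelevant and the conclusion is insensitive to the causal type, which is what makes the single computation at $s=0$ suffice for both cases.
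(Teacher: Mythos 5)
Your proof is correct, and it takes a genuinely leaner route than the paper's. The paper argues by brute force: it computes the unit normal $N$ and the coefficients $E,G,h_{11},h_{22}$ at every point of $S_\gamma^{xy}$, assembles $H$ via \eqref{hh}, eliminates $u''$ through \eqref{eqk}, and lands on the explicit formula \eqref{h1}, from which $H=0$ is equivalent to \eqref{c11} with $\lambda=0$. You instead exploit the symmetry: since the rotations are isometries preserving the surface (and a continuously chosen normal), $H$ is constant along orbits, so it suffices to work at $s=0$; there the normal is forced to be $\pm\mathbf{n}$ by orthogonality to $\gamma$, $\gamma'$ and $\partial_{x_4}$, the meridian normal curvature is $-\kappa$ (your cross-product manipulation agrees with the convention $\langle a\times b,c\rangle=\det(a,b,c)$ implicit in \eqref{eqk}), and the rotational one is $\langle\mathbf{n},Z\rangle/\sinh u$, recycling the value $\langle\mathbf{n},Z\rangle=-v'\cosh(u)^2/|\gamma'|$ already obtained in the proof of Theorem \ref{t12}. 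This is not only shorter; it explains conceptually why minimality is exactly the prescribed-curvature equation \eqref{c12} with $\lambda=0$: the profile curvature must balance the rotational normal curvature. Your closing remark is also the right one: all sign ambiguities ($\delta$, $\epsilon$, the orientation of $N$) are immaterial because only the vanishing of $Eh_{22}+Gh_{11}$ is at stake, so the argument covers the spacelike and timelike cases uniformly. The one thing the paper's heavier computation buys is the formula \eqref{h1} for $H$ of an arbitrary rotational surface of spherical type, not just the characterization of its zero set; that formula is reused in Section \ref{sec5} to show that intrinsic catenaries do not generate minimal surfaces, so your shortcut could not simply replace the paper's proof without redoing that later computation.
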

 
 \begin{proof} 
 In order to simplify the arguments, we can assume without loss of generality that  $\gamma(t)=\Psi(u(t),t)$. Let us see $\gamma(t)\equiv  (\Psi(u(t),t),0)$ as a curve in $\s_1^3$.   A parametrization of $S_\gamma^{xy}$ is 
 $$\mathbf{r}(t,s)=(\cosh(u)\cos(t),\cosh(u)\sin(t),\cosh(s)\sinh(u),\sinh(s)\sinh(u)).$$
 It is necessary the expression of  the mean curvature $H$ of $S_\gamma^{xy}$ in terms of the parametrization $\mathbf{r}$. As usually, let $\{E,F,G\}$ and $\{h_{11},h_{12},h_{22}\}$ be the coefficients of the first and second fundamental form of $S_\gamma^{xy}$ for the parametrization $\mathbf{r}$: 
 $$E=\langle \mathbf{r}_{t},\mathbf{r}_{t}\rangle,\quad F=\langle \mathbf{r}_{t},\mathbf{r}_{s}\rangle,\quad G=\langle \mathbf{r}_{s},\mathbf{r}_{s}\rangle,$$
 $$h_{11}=\langle N,\mathbf{r}_{tt}\rangle,\quad h_{12}=\langle N,\mathbf{r}_{ts}\rangle,\quad h_{22}=\langle N,\mathbf{r}_{ss}\rangle.$$
 Then 
 $$H=\frac{\delta}{2}\frac{Eh_{22}-2Fh_{12}+G h_{11}}{EG-F^2},$$
 where $\delta=1$ if $S_\gamma^{xy}$ is spacelike and $\delta=-1$ if $S_\gamma^{xy}$ is timelike. Notice that if $\epsilon=-1$, then necessarily $\delta=-1$, but if $\epsilon=1$, then $\delta$ may be $1$ or $-1$. In the proof of this theorem, and in the subsequent sections, the coefficient $F$ vanishes, so 
 \begin{equation}\label{hh}
 H=\frac{\delta}{2}\frac{Eh_{22} +G h_{11}}{EG}.
 \end{equation}
 In fact, we will investigate when $S_{\gamma}^{xy}$ is minimal, so our interest focuses under what hypotheses on $\gamma$  we have $Eh_{22}+Gh_{11}$ is $0$ on the surface. 
 
 The calculation of $N$ is obtained knowing that $N$ is not only orthogonal to $\mathbf{r}_{t}$ and $\mathbf{r}_{s}$, but also to $\mathbf{r}$ since $N$ is a tangent vector of $\s_1^3$.   A straightforward computations leads to, 
  $$N=\frac{1}{|\gamma'|}\left(\begin{array}{l}\cos (t) \sinh(u)\cosh(u)-u'\sin (t)   \\  u' \cos (t)  +\sin(t)\sinh(u)\cosh(u) \\
  \cosh (s) \cosh(u)^2\\ \sinh (s) \cosh(u)^2\end{array}   \right).$$
  In particular, $\langle N,N\rangle=-\epsilon$. We also have   
 $$E=  \cosh (u)^2-u'^2=\epsilon|\gamma'|^2,\quad G=\sinh(u)^2.$$
 On the other hand, the coefficients $h_{11}$ and $h_{22}$ of the second fundamental form are  
 \begin{equation*}
 \begin{split}
 h_{11}&=\frac{-\cosh(u)u''+2 u'^2 \sinh(u)-\sinh(u) \cosh(u)^2}{|\gamma'|},\\
 h_{22}&=-\frac{\sinh(u) \cosh(u)^2}{|\gamma'|}.
 \end{split}
 \end{equation*}
 Then 
$$
H=-\frac{\delta\sinh(u)}{4|\gamma'|EG}\left((1-3\cosh(2u))u'^2+\cosh(u)(\cosh(u)+\cosh(3u)+2\sinh(u)u'')\right).
$$
In this expression of $H$, the term $u''$ is replaced in function of the curvature $\kappa$. So, from  \eqref{eqk} we have
$$u''=\frac{\epsilon\kappa|\gamma'|^3-\sinh(u)\cosh(u)^2+2u'^2\sinh(u)}{\cosh(u)}.$$
Then
 \begin{equation}\label{h1}
 H=-\frac{\delta\epsilon\sinh(u)|\gamma'|}{2EG}\left(\cosh(u)^2+\kappa\sinh(u)|\gamma'|\right).
 \end{equation}
 Then $H=0$ if and only if $\cosh(u)^2+\kappa\sinh(u)|\gamma'|$, proving the result thanks to \eqref{c11}. 
 \end{proof}

 \begin{remark} The fact that $\lambda$ must be $0$ in Theorem \ref{t13} in order to ensure that $S_\gamma^{xy}$ is   minimal is expectable because the same situation occurs in Euclidean space $\r^3$. 
 If we immerse $\r^2$ into $\r^3$ by $(x,y)\mapsto (x,0,y)$ and if   the catenary $y(x)$ given in \eqref{cat} is rotated about the $x$-axis of $\r^3$, the mean curvature $H$ of the surface is 
 $$H=\frac12\left(\frac{1}{y\sqrt{1+y'^2}}-\frac{y''}{(1+y'^2)^{3/2}}\right)= \frac{\lambda c^2}{2\cosh(cx+a)^2(\cosh(cx+a)-\lambda c)}.$$
 Thus $H=0$ if and only if $\lambda=0$.
 \end{remark}

 \section{The catenary problem: hyperbolic case}\label{sec3}

In the case of the catenary problem with respect to a Lorentzian plane, we have two possible choices of Lorentzian coordinate planes. After a rigid motion of $\s_1^2$, we can assume that the plane is the plane $\Pi_{xz}$ of equation $y=0$.  In this section it will be studied the curves of $\s_1^2$ that are critical points of the center of mass when it  is measured with respect to the  plane $\Pi_{xz}$. The distance of $(x,y,z)\in\s_1^2$ to $\Pi_{xz}$ is $|y|$. Let $(\s_1^2)_{xz}^+=\{(x,y,z)\in\s_1^2:y>0\}$. Let $\gamma=\gamma(t)$, $t\in [a,b]$, be a curve in $(\s_1^2)_{xz}^+$ and let $\gamma(t)=\Psi(u(t),v(t))$. The expression of the energy \eqref{ee} is  now
$$\mathcal{E}_{xz}[\gamma]=\int_a^b (\cosh(u)\sin(v)+\lambda) \sqrt{\epsilon(v'^2\cosh(u)^2-u'^2)}\, dt.$$
 
\begin{theorem}  \label{t21}
Let $\gamma(t)=\Psi(u(t),v(t))$ be a non-degenerate curve in $(\s_1^2)_{xz}^+$. Then $\gamma$ is a  catenary with respect to $\Pi_{xz}$ if and only if its curvature $\kappa$ satisfies 
\begin{equation}\label{c21}
\kappa=-\frac{u'\cos(v)+v'\sinh(u)\cosh(u) \sin(v)}{(\cosh(u)\sin(v)+\lambda)|\gamma'|}.
\end{equation}
\end{theorem}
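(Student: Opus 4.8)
The plan is to follow the same route as in the proof of Theorem~\ref{t11}, computing the two Euler--Lagrange equations \eqref{sel} for the integrand
\[
J[u,v]=(\cosh(u)\sin(v)+\lambda)\,|\gamma'|,\qquad |\gamma'|=\sqrt{\epsilon(v'^2\cosh(u)^2-u'^2)},
\]
and then showing that both of them factor through a single common expression which, once set to zero, is exactly \eqref{c21}. The essential structural difference with the spherical case is that here the distance function $d=\cosh(u)\sin(v)$ depends on \emph{both} $u$ and $v$, so the Lagrangian is no longer independent of $v$ and the first integral used in Section~\ref{sec2} is unavailable; consequently both equations in \eqref{sel} must be exploited on an equal footing.

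First I would record the partial derivatives of $J$. Writing $d_u=\sinh(u)\sin(v)$ and $d_v=\cosh(u)\cos(v)$, one has $\partial J/\partial u = d_u|\gamma'| + \epsilon(d+\lambda)v'^2\cosh(u)\sinh(u)/|\gamma'|$, $\partial J/\partial u' = -\epsilon(d+\lambda)u'/|\gamma'|$, $\partial J/\partial v = d_v|\gamma'|$, and $\partial J/\partial v' = \epsilon(d+\lambda)v'\cosh(u)^2/|\gamma'|$. When differentiating $\partial J/\partial u'$ and $\partial J/\partial v'$ in $t$, the factor $(d+\lambda)$ contributes an extra term $d_u u' + d_v v'$ (the derivative of $d$ along $\gamma$) that is absent in the spherical case. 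The remaining $t$-derivatives are handled by \eqref{k1} and \eqref{k2}; in particular a short computation gives the clean identity $\frac{d}{dt}\!\left(v'\cosh(u)^2/|\gamma'|\right)=\kappa\,u'\cosh(u)$, in which the $\sinh(u)\cosh(u)\,u'v'/|\gamma'|$ contributions cancel.

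After substituting these ingredients, I would clear the denominator by multiplying each equation by $|\gamma'|$ and replace $|\gamma'|^2$ by $\epsilon(v'^2\cosh(u)^2-u'^2)$. The main obstacle is purely the bookkeeping of this expansion: several terms of the form $(d+\lambda)v'^2\cosh(u)\sinh(u)$ and $\sinh(u)\sin(v)\,u'^2$ appear with opposite signs and must be seen to cancel. Once they do, the $u$-equation collapses to
\[
v'\cosh(u)\Big[(\cosh(u)\sin(v)+\lambda)\,\kappa\,|\gamma'|+u'\cos(v)+v'\sinh(u)\cosh(u)\sin(v)\Big]=0,
\]
and an entirely parallel reduction turns the $v$-equation into the same bracket multiplied by $-u'\cosh(u)$. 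Since $\gamma$ is non-degenerate, $u'$ and $v'$ cannot vanish simultaneously and $\cosh(u)\neq0$, so the bracket must vanish; solving for $\kappa$ yields \eqref{c21}. I expect the only genuine difficulty to be controlling the extra $d_u u'+d_v v'$ terms, and the point to verify carefully is that they combine with the curvature substitution to leave precisely the numerator $u'\cos(v)+v'\sinh(u)\cosh(u)\sin(v)$.
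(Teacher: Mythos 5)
Your proposal is correct and follows essentially the same route as the paper: computing both Euler--Lagrange equations \eqref{sel} for $J=(\cosh(u)\sin(v)+\lambda)|\gamma'|$, simplifying via \eqref{k1} and \eqref{k2} (your identity $\frac{d}{dt}\bigl(v'\cosh(u)^2/|\gamma'|\bigr)=\kappa u'\cosh(u)$ is just \eqref{k2} combined with the product rule), and observing that the two equations share the common bracket whose vanishing is \eqref{c21}. Your factorizations $v'\cosh(u)\cdot[\cdots]=0$ and $-u'\cosh(u)\cdot[\cdots]=0$ check out, and if anything you are slightly more explicit than the paper, which writes out only the $u$-equation and asserts that the $v$-equation coincides with it.
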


\begin{proof} 
We compute the Euler-Lagrange equations \eqref{sel} for the energy $\mathcal{E}_{xz}$. The first equation  
$\frac{\partial J}{\partial u}-\frac{d}{dt} \left(\frac{\partial J}{\partial u'}\right)=0$ writes as
\begin{equation*}
\begin{split}
&\frac{v'\cosh(u)}{|\gamma'|}\left(v'\cosh(u)\sinh(u)\sin(v)+v'\sinh(u)(\cosh(u)\sin(v)+\lambda)+u'\cos(v)\right)\\
&=-(\cosh(u)\sin(v)+\lambda)\frac{d}{dt}\left(\frac{u'}{|\gamma'|}\right).
\end{split}
\end{equation*}
Using \eqref{k1}, this equation is simplified into
  $$\frac{v'\sinh(u)\cosh(u) \sin(v)+u'\cos(v)}{|\gamma'|}=-(\cosh(u)\sin(v)+\lambda) \kappa.$$
The second equation of \eqref{sel} coincides with the above one, obtaining \eqref{c21}.
\end{proof}

As in Theorem \ref{t12}, we characterize the catenaries  with respect to $\Pi_{xz}$ in terms of the   angle that makes the principal normal vector ${\bf n}$ of $\gamma$ with a vector field of $\l^3$. Consider the unit vector field $Y=\partial_y\in\mathfrak{X}(\l^3)$. Notice that $Y$  is   orthogonal to the plane $\Pi_{xz}$.

\begin{theorem}\label{t22} Let $\gamma(t)=\Psi(u(t),v(t))$ be a non-degenerate curve in $(\s_1^2)_{xz}^+$. Then $\gamma$ is a catenary with respect to $\Pi_{xz}$ if and only if its curvature $\kappa$ satisfies 
\begin{equation}\label{c22}
\kappa(t)=-\frac{\langle{\bf n}(t), Y\rangle}{d_{xz}(t)+\lambda},
\end{equation}
where $d_{xz}(t)$ is the distance of $\gamma(t)$ to the plane $\Pi_{xz}$.
\end{theorem}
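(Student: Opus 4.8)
The plan is to mirror exactly the argument used for Theorem \ref{t12}, since Theorem \ref{t22} is the hyperbolic analogue: it merely re-expresses the right-hand side of \eqref{c21} in terms of the angle function $\langle\mathbf{n},Y\rangle$. No new differential-geometric input is needed beyond Theorem \ref{t21}; the entire content reduces to a single determinant computation and a sign check.

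First I would recall, as in the proof of Theorem \ref{t12}, that the unit normal of $(\s_1^2)_{xz}^+$ along $\gamma$ is $\xi(\gamma)=-\gamma$, so that the principal normal vector can be written as
\[
\mathbf{n}=\frac{\xi(\gamma)\times\gamma'}{|\gamma'|}=-\frac{\gamma\times\gamma'}{|\gamma'|}.
\]
Using the defining property $\langle a\times b,c\rangle=\det(a,b,c)$ of the Lorentzian cross product of $\l^3$, this gives
\[
\langle\mathbf{n},Y\rangle=-\frac{\det(\gamma,\gamma',\partial_y)}{|\gamma'|}.
\]

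Next I would evaluate the determinant explicitly with $\gamma=\Psi(u,v)$ and $\partial_y=(0,1,0)$. Expanding along the row corresponding to $\partial_y$ and simplifying with the identity $\cosh(u)^2-\sinh(u)^2=1$, I expect to obtain
\[
\det(\gamma,\gamma',\partial_y)=-\bigl(u'\cos(v)+v'\sinh(u)\cosh(u)\sin(v)\bigr),
\]
and therefore $\langle\mathbf{n},Y\rangle=\bigl(u'\cos(v)+v'\sinh(u)\cosh(u)\sin(v)\bigr)/|\gamma'|$. Substituting this into \eqref{c21}, and recalling that $d_{xz}=\cosh(u)\sin(v)$ is the distance of $\gamma(t)$ to $\Pi_{xz}$, yields \eqref{c22} at once; the stated equivalence then follows directly from Theorem \ref{t21}.

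The computation is entirely routine, so the only points demanding care are bookkeeping ones. The chief subtlety is keeping the sign conventions consistent with the proof of Theorem \ref{t12}: the extra overall minus sign appearing in \eqref{c22}, which is absent from \eqref{c12}, should emerge from the cofactor sign of the cofactor expansion along the $Y$-row (position $(3,2)$, sign $-1$) as opposed to the $Z$-row (position $(3,3)$, sign $+1$) used in the spherical case. I would also verify that $d_{xz}=\cosh(u)\sin(v)>0$ holds on $(\s_1^2)_{xz}^+$. These are the only places where an error could slip in; there is no genuine analytic obstacle to overcome.
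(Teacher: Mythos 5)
Your proposal is correct and follows essentially the same route as the paper: invoke Theorem \ref{t21}, write $\mathbf{n}=-\gamma\times\gamma'/|\gamma'|$ so that $\langle\mathbf{n},Y\rangle=-\det(\gamma,\gamma',\partial_y)/|\gamma'|=\bigl(u'\cos(v)+v'\sinh(u)\cosh(u)\sin(v)\bigr)/|\gamma'|$, and substitute into \eqref{c21} using $d_{xz}=\cosh(u)\sin(v)$. Your determinant value and the sign bookkeeping (the source of the minus sign in \eqref{c22} versus \eqref{c12}) agree with the paper's computation.
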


\begin{proof} Following the same arguments as in Theorem \ref{t12}, we have 
$$\langle{\bf n}(t),Y\rangle=-\frac{ \mbox{det}(\gamma,\gamma',\partial_y)}{|\gamma'|}= \frac{v'\sinh(u)\cosh(u) \sin(v)+ u'\cos(v)}{|\gamma'|}.$$
Since $d_{xz}=\cosh(u)\sin(v)$, it follows the result from \eqref{c21}.
\end{proof}
 
Finally we construct the surfaces of revolution in $\s_1^3$ of hyperbolic type using the catenaries with respect to $\Pi_{xz}$. Consider the geodesic $L_{xz}=\s_1^2\cap\Pi_{xz}$ viewed as a curve in $\s_1^3$. This geodesic $L_{xz}$ will be the rotation axis of a rotational surface of $\s_1^3$ of hyperbolic type because the plane $\Pi_{xz}$ is Lorentzian.  The one-parameter family of rotations fixing $L_{xz}$  is the group $\mathcal{G}_{xz}=\{R_s^{xz}:s\in\r\}$, where
 $$ R_s^{xz}: =\left(\begin{array}{cccc}1&0&0&0\\ 0&\cos s&0&-\sin s\\ 0&0&1&0\\ 0&\sin s&0&\cos s\end{array}\right).$$
 If  $\gamma=\gamma(t)$ is a curve in $(\s_1^2)_{xz}^+$, let $S_\gamma^{xz}$ denote the surface of revolution of $\s_1^3$ obtained by rotating $\gamma$  with respect to $L_{xz}$, $S_\gamma^{xz}=\{\mathcal{R}_s^{xz}\cdot\gamma(t):s\in\r, t\in [a,b]\}$. 
 
 \begin{theorem} \label{t23}
The surface of revolution $S_\gamma^{xz}$ is minimal if and only if $\gamma$ is a catenary with respect to $\Pi_{xz}$ for the Lagrange multiplier $\lambda=0$.
\end{theorem}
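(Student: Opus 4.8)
The plan is to mirror the structure of the proof of Theorem~\ref{t13} as closely as possible, since the hyperbolic case differs only in the choice of reference plane $\Pi_{xz}$ and the corresponding rotation group $\mathcal{G}_{xz}$. First I would parametrize the surface $S_\gamma^{xz}$ explicitly by applying $\mathcal{R}_s^{xz}$ to the curve $\gamma(t)\equiv(\Psi(u(t),v(t)),0)$, writing $\mathbf{r}(t,s)=\mathcal{R}_s^{xz}\cdot(\Psi(u(t),v(t)),0)$. As in the spherical case, I would assume without loss of generality a convenient parametrization of $\gamma$; but note that here the energy density $\cosh(u)\sin(v)+\lambda$ depends on \emph{both} $u$ and $v$, unlike $\mathcal{E}_{xy}$, so I cannot simply set $v(t)=t$. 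I will therefore keep both $u$ and $v$ as functions of $t$ and carry them through.

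Next I would compute the first and second fundamental form coefficients $E,F,G$ and $h_{11},h_{12},h_{22}$ of $\mathbf{r}$. The key structural fact to verify is that the mixed coefficient $F=\langle\mathbf{r}_t,\mathbf{r}_s\rangle$ vanishes, so that the mean curvature reduces to the simplified form \eqref{hh}, namely $H=\tfrac{\delta}{2}(Eh_{22}+Gh_{11})/(EG)$. The unit normal $N$ is determined, as before, by being orthogonal to $\mathbf{r}_t$, $\mathbf{r}_s$, and to $\mathbf{r}$ itself (since $N$ is tangent to $\s_1^3$). The goal is then to show that the factor $Eh_{22}+Gh_{11}$ can be rewritten, after replacing the second-derivative terms $u''$ and $v''$ via the curvature expression \eqref{eqk}, as a multiple of the quantity
\[
\kappa(\cosh(u)\sin(v)+\lambda)|\gamma'|+\bigl(u'\cos(v)+v'\sinh(u)\cosh(u)\sin(v)\bigr),
\]
which vanishes precisely when \eqref{c21} holds with $\lambda=0$. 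This is the analog of the identity \eqref{h1} in the spherical case, where $H$ factored as a constant times $\cosh(u)^2+\kappa\sinh(u)|\gamma'|$.

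I expect the main obstacle to be the bookkeeping in this reduction: because $v$ is now a genuine variable, the expressions for $h_{11}$, $h_{22}$, and $N$ will be considerably bulkier than in Theorem~\ref{t13}, involving $\cos(v)$, $\sin(v)$, and products with $\sinh(u)$, $\cosh(u)$. The crucial simplification is the substitution for $u''$ (and possibly $v''$) coming from \eqref{eqk}, which trades the highest-order derivative terms for $\kappa|\gamma'|^3$ and thereby collapses the otherwise intractable expression into the curvature condition \eqref{c21}. I would also pay attention to the sign conventions $\epsilon$ and $\delta$ governing the spacelike/timelike dichotomy, checking that the final equivalence $H=0\iff\eqref{c21}$ with $\lambda=0$ holds uniformly in both cases, exactly as the identity \eqref{h1} did. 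Finally, once the factorization is achieved, the conclusion is immediate: setting $H=0$ forces the bracketed factor to vanish, and comparing with \eqref{c21} shows this is equivalent to $\gamma$ being a catenary with respect to $\Pi_{xz}$ for $\lambda=0$, completing the proof.
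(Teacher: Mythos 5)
Your proposal is correct and follows essentially the same route as the paper: parametrize $S_\gamma^{xz}$ by the action of $\mathcal{G}_{xz}$ on $\gamma$, verify $F=0$ so that \eqref{hh} applies, compute $E$, $G$, $h_{11}$, $h_{22}$ and the normal $N$, eliminate the second-derivative terms through \eqref{eqk}, and factor $H$ as a nonvanishing multiple of the catenary quantity; the paper indeed lands on $H=-\frac{\delta\epsilon|\gamma'|}{2EG}\left(u'\cos(t)+\sinh(u)\cosh(u)\sin(t)+\cosh(u)\sin(t)|\gamma'|\kappa\right)$, precisely the analogue of \eqref{h1} you anticipated. The only real divergence is the parametrization: the paper \emph{does} assume $\gamma(t)=\Psi(u(t),t)$, and your stated reason for refusing this normalization is mistaken. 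The dependence of the density $\cosh(u)\sin(v)+\lambda$ on $v$ is irrelevant at this stage, because by the time Theorem \ref{t23} is proved, being a catenary has already been encoded in the curvature equation \eqref{c21}, and both \eqref{c21} and the condition $H=0$ are invariant under reparametrization of $\gamma$; hence wherever $v'\neq 0$ one may legitimately take $v$ itself as the parameter. (The $v$-dependence of $J$ only forbids the conserved-quantity shortcut leading to \eqref{first}, which is not used here.) Your choice is therefore not wrong, merely costlier: it produces bulkier expressions, but in exchange it also covers points where $v'=0$, a case the paper's normalization silently excludes. One reassurance concerning your hedge about replacing ``$u''$ and possibly $v''$'': in an arbitrary parametrization the second derivatives enter $H$ only through the combination $u'v''-v'u''$, which is exactly the combination that \eqref{eqk} expresses in terms of $\kappa$, so a single substitution suffices and your plan goes through as written.
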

\begin{proof} 
Without loss of generality, we assume that $\gamma$ is parametrized by $\gamma(t)=\Psi(u(t),t)$, $t\in [a,b]$. 
 A parametrization of $S_\gamma^{xz}$ is 
 $$\mathbf{r}(t,s) =( \cos(t) \cosh(u),\sin(t) \cosh(u)\cos(s),\sinh(u),\sin(s) \sin(t) \cosh(u)).$$
 The unit normal is 
$$N=\frac{1}{|\gamma'|}\left(\begin{array}{c}
\cos (t) \sinh (u) \cosh (u)-\sin (t) u'\\
\cos (s) \left(\cos (t) u'+\sin (t) \sinh (u) \cosh (u)\right)\\
\cosh ^2(u)\\
\sin (s) \left(\cos (t) u'+\sin (t) \sinh (u) \cosh (u)\right)
\end{array}\right),$$
and $\langle N,N\rangle=-\epsilon$. 
The coefficients of the first fundamental form are $F=0$ and 
$$E=\cosh(u)^2-u'^2=\epsilon|\gamma'|^2,\quad G= \sin(t)^2\cosh(u)^2.$$
We also have
\begin{equation*}
\begin{split}
h_{11}&=\frac{2u'^2\sinh(u)-\cosh(u)(u''+\sinh(u)\cosh(u))}{|\gamma'|},\\
 h_{22}&=-\frac{\cosh(t)\sin(t)(u'\cos(t)+\sinh(u)\cosh(u)\sin(t))}{|\gamma'|}.
 \end{split}
 \end{equation*}
Using \eqref{hh}, we have 
\begin{equation*}
\begin{split}
 H&=\frac{\delta\cosh(u)\sin(t)}{4EG|\gamma'|}\\
 & \Big(-2 \sin (t) \cosh(u)^2 \left(u''+\sinh (2 u)\right)+2 \cos (t) u'^3\\
 &+3 \sin (t) u'^2 \sinh (2 u)-2 \cos (t) u' \cosh ^2(u)\Big).
 \end{split}
 \end{equation*}
Using \eqref{eqk}, we replace $u''$ in terms of the curvature $\kappa$, obtaining
$$
H=-\frac{\delta\epsilon|\gamma'|}{2EG}\left(u'\cos(t)+\sinh(u)\cosh(u)\sin(t)+\cosh(u)\sin(t)|\gamma'|\kappa\right).
$$
The result is now immediate from \eqref{c21}.
\end{proof}


\section{The catenary problem: parabolic case} \label{sec4}

This section is devoted to consider catenaries of $\s_1^2$ when the center of mass is calculated with the distance with respect to a degenerate plane. Without loss of generality, let $\Pi_{y-z}$ be the plane of equation $y-z=0$. The distance of    a point  $(x,y,z)$ to $\Pi_{y-z}$ is   $|y-z|$ up to the factor $\sqrt{2}$. Let $(\s_1^2)_{y-z}^+$ be half-space $\{(x,y,z)\in\s_1^2:y-z>0\}$. For a curve $\gamma(t)=\Psi(u(t),v(t))$, $t\in  [a,b]$, contained in $(\s_1^2)_{y-z}^+$, define   the energy   
$$\mathcal{E}_{y-z}[\gamma]=\int_a^b (\cosh(u)\sin(v)-\sinh(u) +\lambda)\sqrt{\epsilon(v'^2\cosh(u)^2-u'^2)}\, dt.$$
where $\lambda$ is a Lagrange multiplier.  

\begin{theorem}  \label{t41}
Let $\gamma(t)=\Psi(u(t),v(t))$ be a non-degenerate curve in $(\s_1^2)_{y-z}^+$. Then $\gamma$ is a  catenary with respect to $\Pi_{y-z}$ if and only if its curvature $\kappa$ satisfies 
\begin{equation}\label{c41}
\kappa= -\frac{v'\cosh(u)(\sinh(u)\sin(v)-\cosh(u))+u' \cos(v)}{(\cosh(u)\sin(v)-\sinh(u)+\lambda)|\gamma'|}.
\end{equation}
\end{theorem}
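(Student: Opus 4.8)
The plan is to compute the two Euler--Lagrange equations \eqref{sel} directly for the Lagrangian
$$J[u,v]=(\cosh(u)\sin(v)-\sinh(u)+\lambda)\sqrt{\epsilon(v'^2\cosh(u)^2-u'^2)},$$
exactly as was done in the spherical (Theorem \ref{t11}) and hyperbolic (Theorem \ref{t21}) cases, and then to simplify using the two structural identities \eqref{k1} and \eqref{k2}. Writing $d_{y-z}=\cosh(u)\sin(v)-\sinh(u)$ for the distance factor, the key observation is that $J$ has exactly the same shape as in the previous two theorems, namely a weight $(d+\lambda)$ times the speed $|\gamma'|$; only the explicit form of $d$ changes. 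So the computation should proceed along identical lines, and I expect the two equations \eqref{sel} to reduce, after inserting \eqref{k1} and \eqref{k2}, to a common factor of the form $v'\cosh(u)\bigl(\cdots\bigr)=0$ and $u'\bigl(\cdots\bigr)=0$ respectively, from which the bracketed factor must vanish because $u'$ and $v'$ cannot vanish simultaneously. That bracket, set to zero and solved for $\kappa$, should be precisely \eqref{c41}.

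Concretely, first I would record the partial derivatives of $d_{y-z}$: $\partial_u d_{y-z}=\sinh(u)\sin(v)-\cosh(u)$ and $\partial_v d_{y-z}=\cosh(u)\cos(v)$. These are the only new ingredients relative to the hyperbolic case, where one had $\partial_u d_{xz}=\sinh(u)\sin(v)$ and $\partial_v d_{xz}=\cosh(u)\cos(v)$; the extra $-\sinh(u)$ in the distance contributes the extra $-\cosh(u)$ in $\partial_u d_{y-z}$. Then I would write out $\tfrac{\partial J}{\partial u}-\tfrac{d}{dt}\bigl(\tfrac{\partial J}{\partial u'}\bigr)=0$. The term $\tfrac{d}{dt}\bigl(\tfrac{\partial J}{\partial u'}\bigr)$ produces, by the product rule, a piece proportional to $\tfrac{d}{dt}\bigl(u'/|\gamma'|\bigr)$, which \eqref{k1} rewrites in terms of $\kappa$ and the combination $v'\sinh(u)/|\gamma'|$. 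After substitution and collecting the $\cosh(u)^2$ and $\sinh(u)$ contributions from $\partial_u d_{y-z}$, the first equation should collapse to
$$\frac{v'\cosh(u)(\sinh(u)\sin(v)-\cosh(u))+u'\cos(v)}{|\gamma'|}=-(d_{y-z}+\lambda)\kappa,$$
which is \eqref{c41}. As happened in the hyperbolic case, I anticipate the second Euler--Lagrange equation (involving \eqref{k2}) to reduce to the very same relation, so no independent information is gained from it; this redundancy is the analytic reflection of the fact that the critical curve is determined by a single scalar condition on $\kappa$.

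The main obstacle is purely computational bookkeeping rather than conceptual: the degenerate distance $\cosh(u)\sin(v)-\sinh(u)$ mixes both coordinates and both the $\sin(v)$ and $\sinh(u)$ terms, so the raw Euler--Lagrange expressions carry more cross terms than in the spherical case (where $J$ was independent of $v$) and must be cancelled carefully. The crucial simplifying levers are \eqref{k1} and \eqref{k2}, which convert every occurrence of the awkward derivative combinations $v'u''-u'v''$ and of $\tfrac{d}{dt}(u'/|\gamma'|)$, $\tfrac{d}{dt}(v'\cosh(u)/|\gamma'|)$ into $\kappa$ plus first-order data; the trick is to recognise, before differentiating, which groupings of terms will match the right-hand sides of \eqref{k1} and \eqref{k2}. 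Since $u'$ and $v'$ cannot vanish together for a non-degenerate curve, the shared scalar factor must vanish, and one must only verify that this factor is nonzero so that dividing through is legitimate, which holds because $d_{y-z}+\lambda>0$ on $(\s_1^2)_{y-z}^+$ by the choice of domain.
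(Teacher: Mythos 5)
Your proposal follows exactly the paper's method: write the Euler--Lagrange equations \eqref{sel} for $J=(\cosh(u)\sin(v)-\sinh(u)+\lambda)\,|\gamma'|$, substitute \eqref{k1} (and \eqref{k2}), and extract the common bracket, which is precisely \eqref{c41}; the paper in fact computes only the $u$-equation (noting it suffices), and the structure you anticipate, $v'\cosh(u)(\cdots)=0$ and $u'(\cdots)=0$ with a shared bracket killed because $u'$ and $v'$ cannot vanish simultaneously, is exactly what occurs. The only loose end is your closing claim that $d_{y-z}+\lambda>0$ follows from the choice of domain: the domain only gives $d_{y-z}>0$, and the Lagrange multiplier $\lambda$ may be negative, so nonvanishing of the denominator in \eqref{c41} is an implicit nondegeneracy assumption (as it also is, tacitly, in the paper) rather than a consequence of $\gamma\subset(\s_1^2)_{y-z}^+$.
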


\begin{proof} 
The computations are similar as in Theorem \ref{t11}. For this, it suffices to compute $\frac{\partial J}{\partial u}-\frac{d}{dt} \left(\frac{\partial J}{\partial u'}\right)=0$. Then we have 
\begin{equation*}
\begin{split}
&\frac{v'^2\left(\cosh(u)^2(\sinh(u)\sin(v)-\cosh(u))+\sinh(u)\cosh(u)(\cosh(u)\sin(v)-\sinh(u)+\lambda)\right)}{|\gamma'|}\\
&=
-\frac{\cosh(u)\cos(v)u'v'}{|\gamma'|}-(\cosh(u)\sin(v)-\sinh(u)+\lambda)\frac{d}{dt}\left(\frac{u'}{|\gamma'|}\right).\end{split}
\end{equation*}
Using \eqref{k1}, and simplifying, 
\begin{equation*}
\begin{split}
&\frac{v'^2\cosh(u)^2(\sinh(u)\sin(v)-\cosh(u))+u'v'\cosh(u)\cos(v)}{|\gamma'|}\\
&=-(\cosh(u)\sin(v)-\sinh(u)+\lambda)v'\cosh(u)\kappa.
\end{split}
\end{equation*}

 \end{proof}
 
We characterize the catenaries of Theorem \ref{t41}  in terms of the   angle that makes the  principal normal vector ${\bf n}$ of $\gamma$ with the vector field $T=\partial_y+\partial_z\in\mathfrak{X}(\l^3)$. This vector field is      orthogonal to the plane $\Pi_{y-z}$.

\begin{theorem}\label{t42} Let $\gamma(t)=\Psi(u(t),v(t))$ be a non-degenerate curve in $(\s_1^2)_{y-z}^+$. Then $\gamma$ is a catenary with respect to $\Pi_{y-z}$ if and only if its curvature $\kappa$ satisfies 
\begin{equation}\label{c42}
\kappa(t)=-\frac{\langle{\bf n}(t), T\rangle}{d_{y-z}(t)+\lambda},
\end{equation}
where $d_{y-z}(t)$ is the distance of $\gamma(t)$ to the plane $\Pi_{y-z}$.
\end{theorem}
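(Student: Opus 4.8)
The plan is to follow closely the strategy used in the proofs of Theorems \ref{t12} and \ref{t22}, since Theorem \ref{t41} already characterizes the catenaries through their geodesic curvature; what remains is only to rewrite the right-hand side of \eqref{c41} as an inner product with the vector field $T$. First I would recall that, exactly as in those proofs, the principal normal along $\gamma$ is $\mathbf{n}=-\frac{\gamma\times\gamma'}{|\gamma'|}$, and that for any fixed vector $W\in\l^3$ the Lorentzian cross product obeys $\langle\gamma\times\gamma',W\rangle=\det(\gamma,\gamma',W)$, whence $\langle\mathbf{n},W\rangle=-\frac{\det(\gamma,\gamma',W)}{|\gamma'|}$.

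The crucial simplification is that $T=\partial_y+\partial_z$ decomposes as the sum of the two vector fields $Y=\partial_y$ and $Z=\partial_z$ already treated, and the determinant is linear in its last slot. Hence $\langle\mathbf{n},T\rangle=\langle\mathbf{n},Y\rangle+\langle\mathbf{n},Z\rangle$, and I can simply add the two expressions computed in Theorems \ref{t22} and \ref{t12}:
$$\langle\mathbf{n},T\rangle=\frac{v'\sinh(u)\cosh(u)\sin(v)+u'\cos(v)}{|\gamma'|}-\frac{v'\cosh(u)^2}{|\gamma'|}.$$
Next I would collect the $v'$-terms into $v'\cosh(u)(\sinh(u)\sin(v)-\cosh(u))$, obtaining
$$\langle\mathbf{n},T\rangle=\frac{v'\cosh(u)(\sinh(u)\sin(v)-\cosh(u))+u'\cos(v)}{|\gamma'|}.$$
Observing that the distance to the degenerate plane is $d_{y-z}=\cosh(u)\sin(v)-\sinh(u)$ (the same quantity appearing in $\mathcal{E}_{y-z}$), the numerator here is precisely $|\gamma'|$ times the numerator in \eqref{c41}, while the denominator of \eqref{c41} is $(d_{y-z}+\lambda)|\gamma'|$. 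Substituting yields $\kappa=-\frac{\langle\mathbf{n},T\rangle}{d_{y-z}+\lambda}$, which is \eqref{c42}, and the asserted equivalence follows at once from Theorem \ref{t41}.

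I expect no genuine obstacle here: the only work is the sign- and factor-bookkeeping when combining the two previously computed inner products, together with the standard fact that the Lorentzian triple product equals the Euclidean determinant, which underlies the formula for $\langle\mathbf{n},W\rangle$. If one prefers not to invoke linearity, the identical result is reached by computing $\det(\gamma,\gamma',T)$ directly from $\Psi(u,v)$ and expanding along the row corresponding to $T=(0,1,1)$; the two surviving $2\times2$ minors reproduce exactly the numerators already seen in the spherical and hyperbolic cases.
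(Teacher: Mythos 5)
Your proof is correct and follows essentially the same route as the paper: the paper's own proof also consists of computing $\langle\mathbf{n}(t),T\rangle$ and comparing with \eqref{c41}, your only (cosmetic) difference being that you obtain the inner product by linearity from the two cases already treated, $\langle\mathbf{n},T\rangle=\langle\mathbf{n},Y\rangle+\langle\mathbf{n},Z\rangle$, rather than expanding $\det(\gamma,\gamma',T)$ from scratch. One point worth recording: your sign on the $u'\cos(v)$ term is the correct one, whereas the formula printed in the paper's proof,
$$\langle\mathbf{n}(t),T\rangle=\frac{v'\cosh(u)\left(\sinh(u)\sin(v)-\cosh(u)\right)-u'\cos(v)}{|\gamma'|},$$
contains a sign typo: with $-u'\cos(v)$ the relation $\kappa=-\langle\mathbf{n},T\rangle/(d_{y-z}(t)+\lambda)$ would not reduce to \eqref{c41}, while your $+u'\cos(v)$ (confirmed both by adding the expressions from Theorems \ref{t12} and \ref{t22} and by directly expanding $-\det(\gamma,\gamma',T)/|\gamma'|$) makes \eqref{c42} and \eqref{c41} coincide exactly. (A trivial slip in your prose: the numerator of your fraction \emph{equals} the numerator in \eqref{c41}, it is not $|\gamma'|$ times it --- the extra factor $|\gamma'|$ sits in the denominator of \eqref{c41}; your displayed equations already handle this correctly.)
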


\begin{proof} It suffices to compute $\langle{\bf n}(t), T\rangle$,  
$$\langle{\bf n}(t),T\rangle=\frac{v' \cosh(u)\left(\sinh(u)\sin(v)-\cosh(u)\right)-u' \cos(v)}{|\gamma'|}$$
and use   \eqref{c41}.
\end{proof}

We now consider rotational surfaces of $\s_1^3$ about the rotation axis $L_{y-z}=\Pi_{y-z}\cap\s_1^2\subset\s_1^3$. Since the plane $\Pi_{y-z}$ is degenerate, the surface of revolution is of parabolic type.     The one-parameter group $\mathcal{G}_{y-z}$ of rotations of $\s_1^3$ fixing $L_{y-z}$  is the group $\mathcal{G}_{y-z}=\{R_s^{y-z}:s\in\r\}$, where 
$$R_s^{y-z}=\left(
\begin{array}{cccc}
 1 & 0 & 0 & 0 \\
 0 & 1-\frac{s^2}{2} & \frac{s^2}{2} & s \\
 0 & -\frac{s^2}{2} & \frac{s^2}{2}+1 & s \\
 0 & -s & s & 1 \\
\end{array}
\right).$$
Let $\gamma=\gamma(t)$ be a curve in $\s_1^2$ and let $S_\gamma^{y-z}=\{\mathcal{R}_s^{y-z}\cdot\gamma(t):s\in\r,t\in [a,b]\}$ the surface of revolution defined by $\gamma$. 

\begin{theorem}\label{t43}
The surface of revolution $S_\gamma^{y-z}$ is  minimal if and only if $\gamma$ is a catenary with respect to $\Pi_{y-z}$ for the Lagrange multiplier $\lambda=0$.
\end{theorem}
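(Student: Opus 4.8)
The plan is to carry out exactly the same computational scheme that succeeded in the proofs of Theorems \ref{t13} and \ref{t23}, now with the parabolic rotation group $\mathcal{G}_{y-z}$. First I would assume, without loss of generality, that $\gamma(t)=\Psi(u(t),t)$, regarded as the curve $(\Psi(u(t),t),0)$ in $\s_1^3$, and apply $\mathcal{R}_s^{y-z}$ to obtain an explicit parametrization $\mathbf{r}(t,s)=\mathcal{R}_s^{y-z}\cdot\gamma(t)$ of $S_\gamma^{y-z}$. Since the entries of $\mathcal{R}_s^{y-z}$ are quadratic polynomials in $s$, this reduces to a single matrix multiplication; the distance to the axis enters through $d_{y-z}=\cosh(u)\sin(t)-\sinh(u)$.

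Next I would determine the unit normal $N$ from the three orthogonality conditions $\langle N,\mathbf{r}_t\rangle=\langle N,\mathbf{r}_s\rangle=\langle N,\mathbf{r}\rangle=0$ (the last because $N$ is tangent to $\s_1^3$), and check $\langle N,N\rangle=-\epsilon$. Then I would compute the coefficients of the first fundamental form, expecting $F=0$ as in the two previous cases so that the mean curvature is given by the reduced formula \eqref{hh}, together with $h_{11}$ and $h_{22}$. Substituting into \eqref{hh} yields an expression for $H$ involving $u''$, which I would eliminate by means of the curvature identity \eqref{eqk}. I expect this to collapse $H$ into the factored form
$$H=-\frac{\delta\epsilon|\gamma'|}{2EG}\Big(u'\cos(t)+\cosh(u)(\sinh(u)\sin(t)-\cosh(u))+\kappa\,|\gamma'|\,(\cosh(u)\sin(t)-\sinh(u))\Big).$$
Comparing the parenthesis with the catenary equation \eqref{c41} specialized to $v=t$ and $\lambda=0$ shows at once that $H=0$ if and only if $\gamma$ is a catenary with respect to $\Pi_{y-z}$ for $\lambda=0$, which is the assertion. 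Together with Theorems \ref{t13} and \ref{t23} this also completes the proof of Theorem \ref{t-main}.

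The main obstacle I anticipate is algebraic bookkeeping rather than anything conceptual. Because $\mathcal{R}_s^{y-z}$ is parabolic (its entries are degree-two in $s$, and the direction $T=\partial_y+\partial_z$ orthogonal to $\Pi_{y-z}$ is null), the intermediate expressions for $N$, for $G$, and especially for $h_{22}$ are bulkier and far less symmetric than in the spherical and hyperbolic settings, and I must verify carefully that the $s$-dependence cancels so that $F=0$ and that $G$ and $h_{22}$ carry the same overall $s$-factor, which then drops out of the ratio defining $H$. The truly delicate point is confirming that, after the substitution of \eqref{eqk}, the numerator of $H$ factors cleanly through the distance $d_{y-z}=\cosh(u)\sin(t)-\sinh(u)$ so as to reproduce precisely the left-hand side of \eqref{c41} with $\lambda=0$; any sign error or missed cross term in the null-direction computation would spoil this factorization. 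Once it is checked, the equivalence is immediate.
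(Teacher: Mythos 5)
Your proposal is correct and follows essentially the same route as the paper's own proof: parametrize $S_\gamma^{y-z}$ via $\mathbf{r}(t,s)=\mathcal{R}_s^{y-z}\cdot(\Psi(u(t),t),0)$, compute $N$, verify $F=0$ (the paper also finds $h_{12}=0$) so that \eqref{hh} applies, eliminate $u''$ through \eqref{eqk}, and match the resulting factored numerator against \eqref{c41} with $v'=1$ and $\lambda=0$. The factored form of $H$ you anticipate is exactly what the paper obtains (the paper merely writes the prefactor with the distance factor $\cosh(u)\sin(t)-\sinh(u)$ pulled out instead of $|\gamma'|$), so nothing in your plan needs to change.
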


\begin{proof}
The computations are simplified again if the generating curve is assumed to be   $\gamma(t)=(\Psi(u(t),t),0)$. The parametrization of $S_\gamma^{y-z}$  is 
 $$\mathbf{r}(t,s) =\left(\begin{array}{l}
 \cos (t) \cosh (u)\\
  \frac{s^2}{2} \sinh (u)-\frac{s^2-2}{2} \sin (t) \cosh (u) \\
\frac{s^2+2}{2} \sinh (u)-\frac{s^2}{2} \sin (t) \cosh (u)\\
 s (\sinh (u)-\sin (t) \cosh (u))
 \end{array}\right).$$
We compute the mean curvature $H$ of $S_{\gamma}^{y-z}$. The unit normal $N$ of $S_{\gamma}^{y-z}$ is 
$$N=\frac{1}{|\gamma'|}\left(\begin{array}{c}
  \cos (t) \sinh (2 u)-2 \sin (t) u'\\
 - \left(s^2-2\right) \cos (t) u'+ s^2 \cosh ^2(u)-\frac{s^2-2}{2} \sin (t) \sinh (2 u)\\
 -s^2 \cos (t) u'+  \left(s^2+2\right) \cosh ^2(u)-\frac{s^2}{2}\sin (t)) \sinh (2 u)\\
 - s \left(2 \cos (t) u'-2 \cosh ^2(u)+\sin (t) \sinh (2 u)\right)
\end{array}\right)$$
and $\langle N,N\rangle=-\epsilon$.
The coefficients of the first fundamental form are $F=0$ and 
$$E=\cosh(u)^2-u'^2=\epsilon|\gamma'|^2,\quad G=(\sinh(u)-\cosh(u)\sin(t))^2.$$
The coefficients of the second fundamental form are $h_{12}=0$ and 
\begin{equation*}
\begin{split}
h_{11}&=\frac{2\sinh(u)u'^2-\sin(u)\cosh(u)^2-\cosh(u)u''}{|\gamma'|},\\
h_{22}&=\frac{(\cosh(u)\sin(t)-\sinh(u))(\cosh(u)^2-\sinh(u)\cosh(u)\sin(t)-u'\cos(t))}{|\gamma'|}.
\end{split}
\end{equation*}
As in the above cases, the mean curvature $H$ is calculated using the formula \eqref{hh}. After substituting $u''$ in function of $\kappa$ thanks to \eqref{eqk}, we obtain 
\begin{equation*}
\begin{split}
H&=\frac{\delta\epsilon(\cosh(u)\sin(t)-\sinh(u))}{2EG}\\
&\Big( \kappa(\cosh(u)\sin(t)-\sinh(u))|\gamma'|+\cosh(u)(\sinh(t)\sin(t)\\
&-\cosh(u))+u'\cos(t)\Big).
\end{split} 
\end{equation*}
Then \eqref{c41} completes the proof.
\end{proof}
InFigure \ref{fig1}  shows pictures of catenaries of $\s_1^2$ with respect to the three types of causal character of the plane reference.

\begin{figure}[hbtp]
\begin{center}
\includegraphics[width=.3\textwidth]{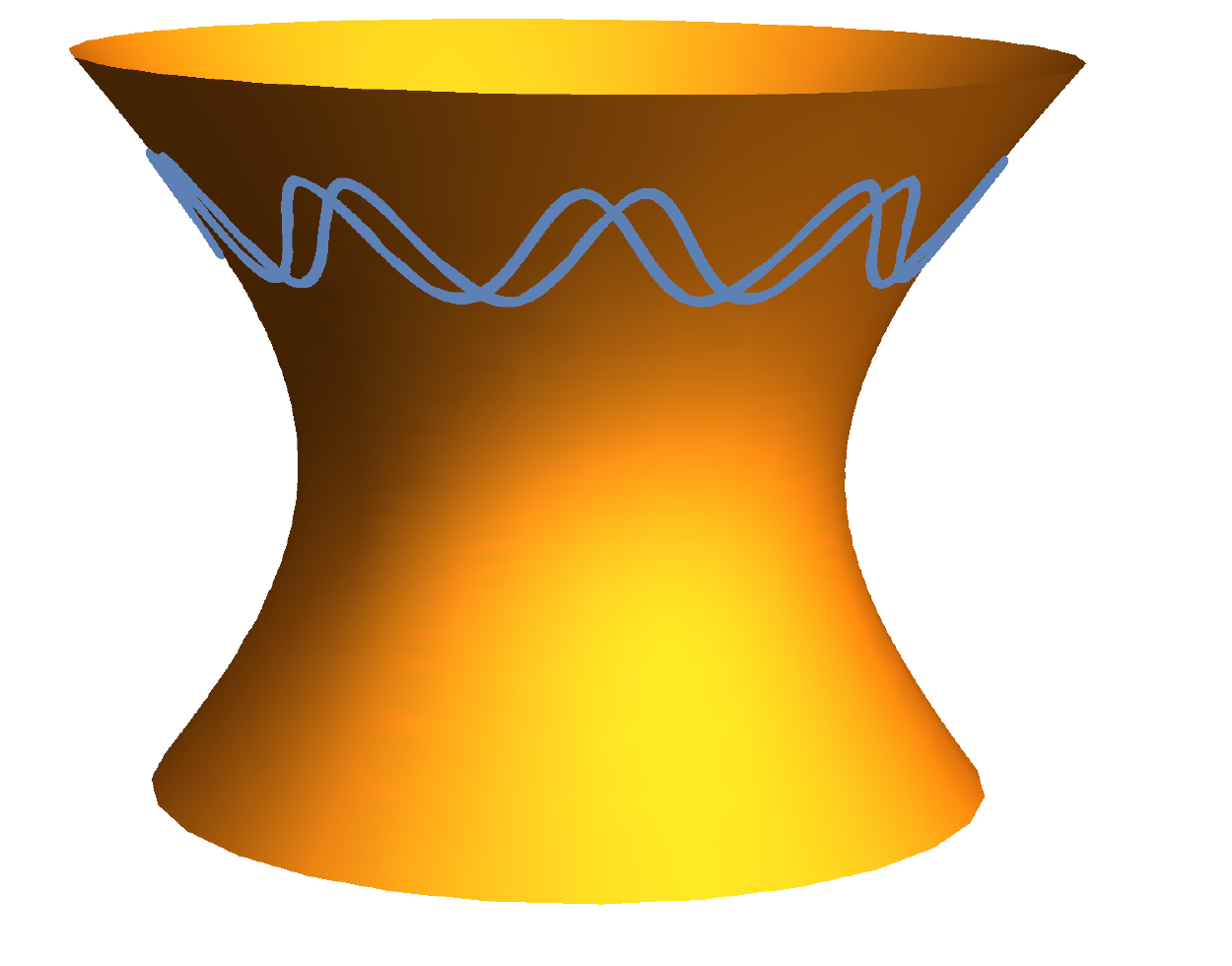} \includegraphics[width=.26\textwidth]{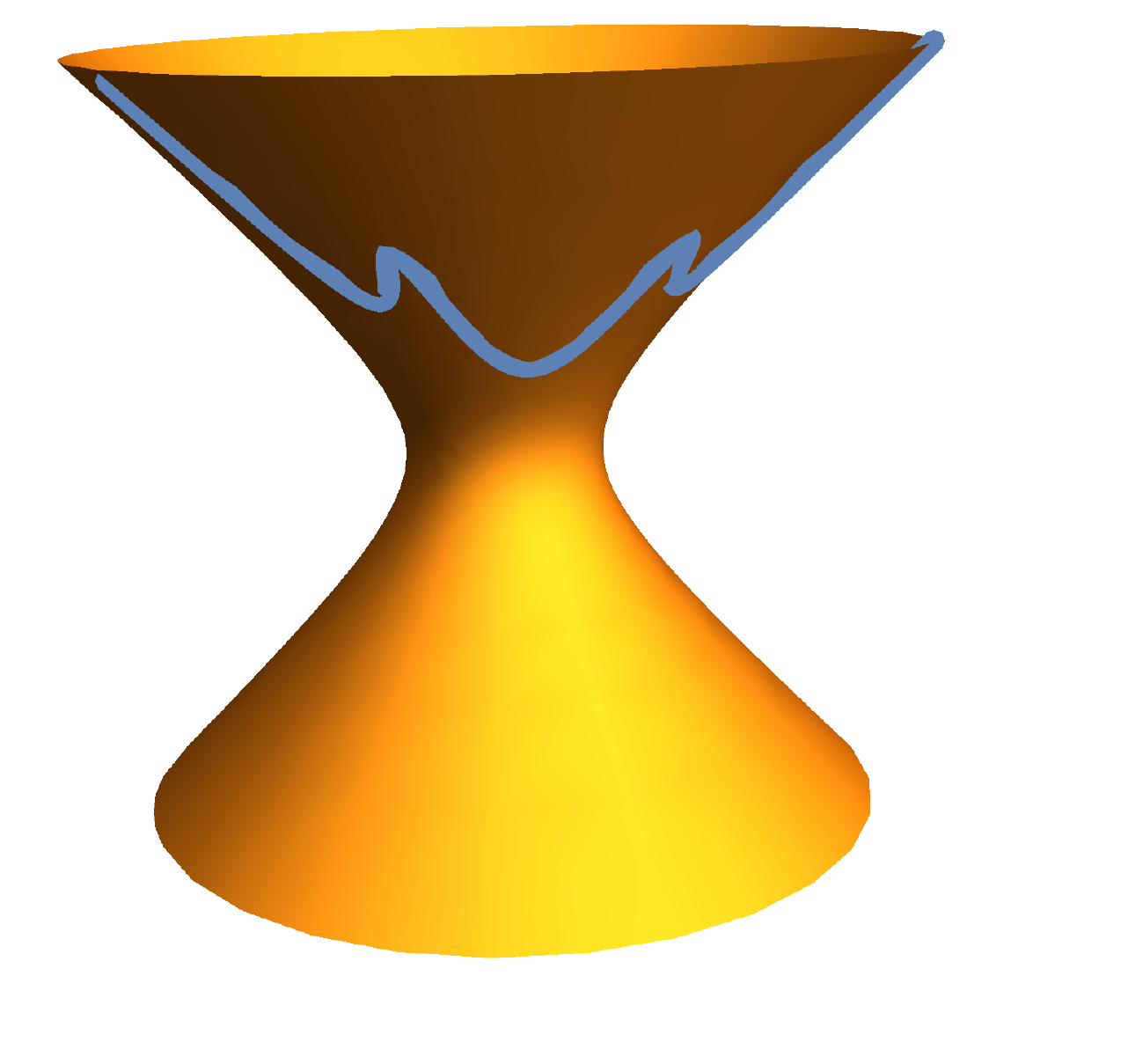} \includegraphics[width=.32\textwidth]{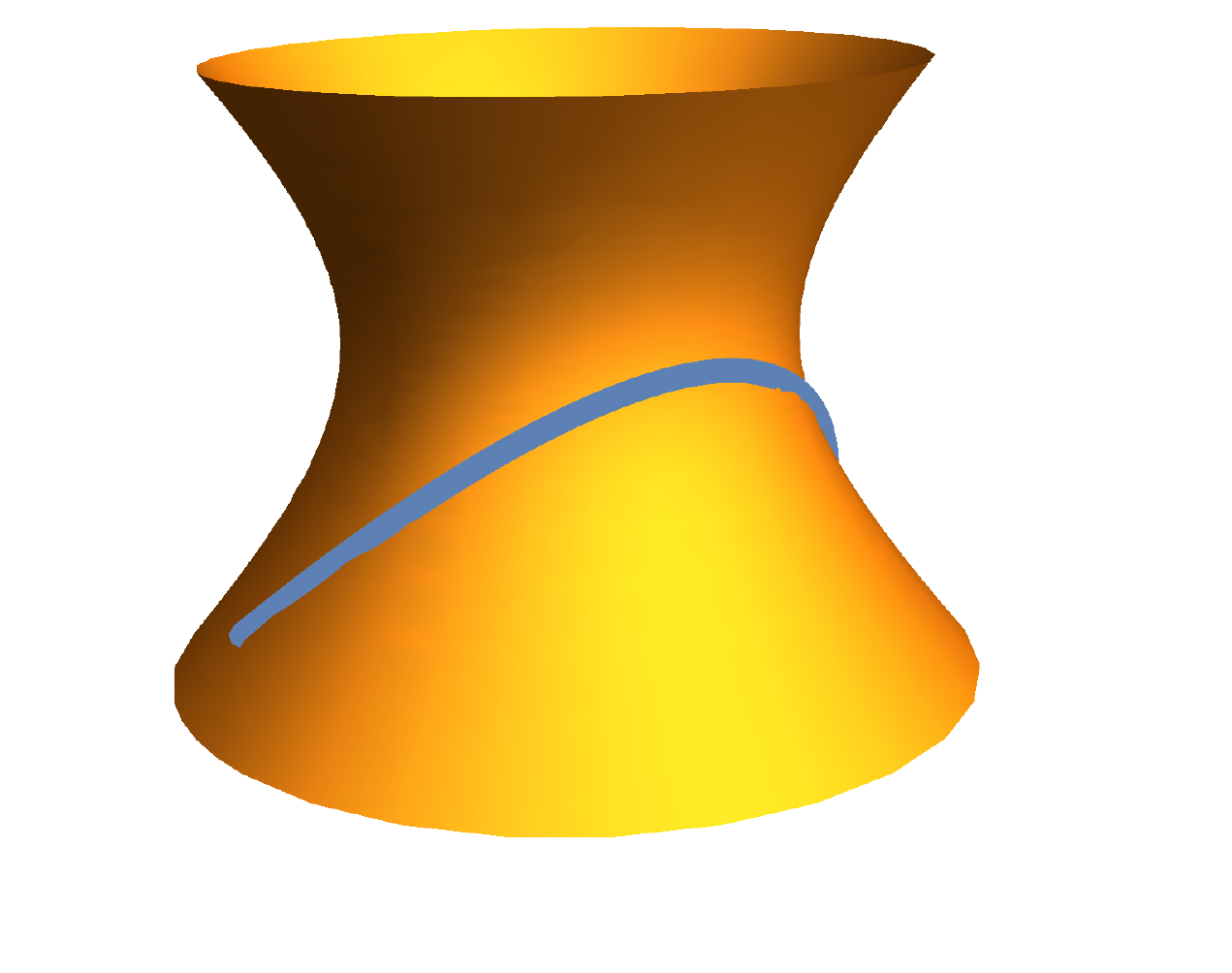} 
\end{center}
\caption{Catenaries of $\s_1^2$: with respect to spacelike plane (left), timelike plane (middle) and degenerate plane (right). }\label{fig1}
\end{figure}

\section{The intrinsic catenary problem}\label{sec5}

The catenaries studied in the previous sections are critical points of energies  that measure the center of mass with respect to  planes of $\l^3$.  However, if one wants to pose an `intrinsic catenary problem' involving only data in $\s_1^2$,  it would be desirable that the distance were measured in $\s_1^2$   with respect to a geodesic of $\s_1^2$. In this section we will investigate this problem in the particular case that this geodesic is spacelike. Without loss of generality, consider the spacelike geodesic $L=\s_1^2\cap\Pi_{xy}$, which will serve as reference line to measure the center of mass of a curve.
 
 The distance of a point $(x,y,z)\in\s_1^2$ from $L$ is computed as follows. Let $(x,y,z)=\Psi(u,v)$. Then the geodesic on $\s_1^2$ passing through to $(x,y,z)$ and orthogonal to $L$ is $\alpha(t)=\Psi(u+t,v)$, $t\in [-u,0]$. The intrinsic distance of $(x,y,z)$ to $L$ is 
$$\int_{-u}^{0}|\alpha'(t)|\, dt=\int_0^{-u}dt=u=\mbox{arcsinh}(z).$$
In particular, it is necessary that  $z\not=0$. Consider the half-space   $(\s_1^2)_{xy}^+=\{(x,y,z)\in \s_1^2:z>0\}$. If $\gamma$ is a curve in $(\s_1^2)_{xy}^+$ parametrized by $\gamma(t)=\Psi(u(t),v(t))$,  the functional energy is defined by 
$$\mathcal{E}_{in}[\gamma]=\int_a^b (u+\lambda)\sqrt{\epsilon(v'^2\cosh(u)^2-u'^2)}\, dt,$$
where $\lambda$ is a Lagrange multiplier that indicates that in the variational problem it is assumed that all curves have fixed length. 
A critical point of $\mathcal{E}_{in}$ is called an {\it intrinsic catenary} of $\s_1^2$ with respect to the geodesic $L$.

\begin{theorem}\label{t1} Let $\gamma(t)=\Psi(u(t),v(t))$ be a non-degenerate curve in $(\s_1^2)_{xy}^+$. Then $\gamma$ is an intrinsic catenary with respect to $L$ if and only if its curvature $\kappa$ satisfies 
\begin{equation}\label{c51}
\kappa=-\frac{v'\cosh(u)}{(u+\lambda)|\gamma'|}.
\end{equation}
\end{theorem}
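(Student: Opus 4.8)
The plan is to derive the Euler--Lagrange equation of $\mathcal{E}_{in}$ following verbatim the same scheme used in Theorems \ref{t11}, \ref{t21} and \ref{t41}, since the only structural difference between those energies and $\mathcal{E}_{in}$ is the \emph{weight function}. Here the weight is $u+\lambda$, whereas in the spherical case it was $\sinh(u)+\lambda$. The general Lagrangian has the form $J[u,v]=f(u,v)|\gamma'|$ with $|\gamma'|=\sqrt{\epsilon(v'^2\cosh(u)^2-u'^2)}$, and in the present case $f(u,v)=u+\lambda$ depends on $u$ only, with $\partial_u f = 1$ and $\partial_v f = 0$. So first I would write down the two equations \eqref{sel} for $J=(u+\lambda)|\gamma'|$.

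The second equation, coming from $\partial J/\partial v - \frac{d}{dt}(\partial J/\partial v')=0$, is the cleaner one because $J$ is independent of $v$: we get
\begin{equation*}
\frac{d}{dt}\left(\frac{\partial J}{\partial v'}\right)=\frac{d}{dt}\left(\frac{\epsilon(u+\lambda)v'\cosh(u)^2}{|\gamma'|}\right)=0.
\end{equation*}
Expanding this derivative and invoking identity \eqref{k2} to convert $\frac{d}{dt}\left(\frac{v'\cosh(u)}{|\gamma'|}\right)$ into a term with $\kappa$ is exactly the manoeuvre that collapses the expression. Concretely, after applying the product rule and substituting \eqref{k2}, the terms reorganise so that one can factor out $u'$ (or $\cosh(u)$), leaving the bracketed quantity $\frac{v'\cosh(u)}{|\gamma'|}+\kappa(u+\lambda)$; setting this to zero yields \eqref{c51}. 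I would carry out the first equation in parallel as a consistency check: using \eqref{k1} to handle $\frac{d}{dt}\left(\frac{u'}{|\gamma'|}\right)$, it should reduce to the same bracketed factor multiplied by $v'\cosh(u)$, so that both equations encode the single relation \eqref{c51}, with the standard caveat that $u'$ and $v'$ cannot vanish simultaneously.

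The one genuine subtlety — and the main point where this proof differs from the previous three — is the computation of $\partial J/\partial u$. Because $f=u+\lambda$ we have $\partial_u f = 1$ rather than $\cosh(u)$ or $\cosh(u)\sin(v)$, so the $\frac{\partial J}{\partial u}$ contribution to the first Euler--Lagrange equation is $|\gamma'|$ plus the usual term $(u+\lambda)\,\partial_u|\gamma'|$; I would need to confirm that the $|\gamma'|$ piece and the $\partial_u|\gamma'|$ piece combine correctly with $\frac{d}{dt}(\partial J/\partial u')$ after the \eqref{k1} substitution. This is the step most likely to hide a sign or a stray factor, but it is purely mechanical; structurally nothing new happens, since the dependence on $v$ is still absent and the two equations must coincide up to the $u'/v'$ factor, just as in the spherical case. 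Assuming earlier identities \eqref{eqk}, \eqref{k1} and \eqref{k2}, the computation is a straightforward specialisation and the result \eqref{c51} follows, with the meridian case $v'=0$ again corresponding to $\kappa=0$.
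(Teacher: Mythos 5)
Your proposal is correct and follows essentially the same route as the paper: both Euler--Lagrange equations \eqref{sel} for $J=(u+\lambda)|\gamma'|$ are computed, the identities \eqref{k1} and \eqref{k2} convert the derivative terms into curvature terms, and the two equations reduce to $v'\cosh(u)$ and $u'\cosh(u)$ times the common factor $\frac{v'\cosh(u)}{|\gamma'|}+(u+\lambda)\kappa$, which must vanish since $u'$ and $v'$ cannot vanish simultaneously. The only (immaterial) difference is one of emphasis: you lead with the $v$-equation as a conservation law, while the paper simply writes both equations and reduces them in parallel.
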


\begin{proof} We compute the Euler-Lagrange equations of $\mathcal{E}_{in}$ using \eqref{sel}. Both equations are
$$\frac{v'^2\cosh(u)(\cosh(u)+(u+\lambda)\sinh(u))}{|\gamma'|}=-(u+\lambda)\frac{d}{dt}\left(\frac{u'}{|\gamma'|}\right),$$
$$ \frac{u'v'\cosh(u)(\cosh(u)+(u+\lambda)\sinh(u))}{|\gamma'|}+(u+\lambda)\cosh(u)\frac{d}{dt}\left(\frac{v'\cosh(u)}{|\gamma'|}\right)=0.$$

Using the expressions \eqref{k1} and \eqref{k2}, we have
$$v'\cosh(u)\left((u+\lambda)\kappa+\frac{v'\cosh(u)}{|\gamma'|}\right)=0,$$
$$u'\cosh(u)\left((u+\lambda)\kappa+\frac{v'\cosh(u)}{|\gamma'|}\right)=0,$$
proving the result.
\end{proof}

It is possible to  characterize intrinsic catenaries of $\s_1^2$ as solutions a prescribing curvature equation involving the principal normal vector of the curve and vector fields. In contrast to Theorems \ref{t12}, \ref{t22} and \ref{t42}, it is natural that now the vector field is a vector field on $\s_1^2$. Furthermore, this vector field will indicate the direction of how is measuring the distance to $L$. Since this distance is done using the geodesics orthogonal to $L$, that is, the meridians of $\s_1^2$, the vector field will be tangent to the meridians. Hence, define   the vector field $V\in\mathfrak{X}(\s_1^2)$  
\begin{equation*}
V(\Psi(u,v))=\Psi_u=\sinh(u)\cos(v)\partial_x+\sinh(u)\sin(v)\partial_y+\cosh(u)\partial_z.
\end{equation*}

\begin{theorem} \label{t2}Let $\gamma(t)=\Psi(u(t),v(t))$ be a non-degenerate curve in $(\s_1^2)_{xy}^+$. Then $\gamma$ is an intrinsic catenary with respect to $L$ if and only if its curvature $\kappa$ satisfies 
\begin{equation*} 
\kappa(t)=\frac{\langle{\bf n}(t), V\rangle}{d(t)+\lambda},
\end{equation*}
where $d(t)$ is the intrinsic distance of $\gamma(t)$ to $L$.
\end{theorem}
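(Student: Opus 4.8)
{\bf Proof proposal.}

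The plan is to mirror the structure of Theorem~\ref{t12} and Theorem~\ref{t22}, since Theorem~\ref{t2} is the intrinsic analogue of those results, now with the vector field $V$ replacing the ambient fields $Z$, $Y$. First I would invoke Theorem~\ref{t1}, which already gives the Euler--Lagrange characterization of intrinsic catenaries: $\gamma$ is an intrinsic catenary if and only if its curvature satisfies \eqref{c51}, namely $\kappa=-\dfrac{v'\cosh(u)}{(u+\lambda)|\gamma'|}$. The intrinsic distance has already been computed in this section to be $d(t)=u(t)$, so the denominators match automatically and the entire problem reduces to an algebraic identification of the numerator: I must show that $\langle{\bf n}(t),V\rangle=-v'\cosh(u)$.

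The key step is therefore to compute $\langle{\bf n}(t),V\rangle$ explicitly. Following the same argument as in Theorem~\ref{t12}, I would use the formula ${\bf n}=-\dfrac{\gamma\times\gamma'}{|\gamma'|}$ for the principal normal, where the Lorentzian cross product of $\l^3$ is the relevant one. Since $V=\Psi_u=\sinh(u)\cos(v)\partial_x+\sinh(u)\sin(v)\partial_y+\cosh(u)\partial_z$, the pairing becomes a determinant-type expression
\begin{equation*}
\langle{\bf n}(t),V\rangle=-\frac{\langle\gamma\times\gamma',V\rangle}{|\gamma'|},
\end{equation*}
which I would evaluate by substituting $\gamma=\Psi(u,v)$, $\gamma'=u'\Psi_u+v'\Psi_v$, and $V=\Psi_u$. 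The cross product $\gamma\times\gamma'$ expands using bilinearity, and because $\gamma=\Psi$ together with $\Psi_u,\Psi_v$ forms a frame, the triple products $\langle\Psi\times\Psi_u,\Psi_u\rangle$ and $\langle\Psi\times\Psi_v,\Psi_u\rangle$ collapse to manageable quantities; only the $v'$ term survives and should contribute the factor $\cosh(u)$ coming from $\langle\Psi_u,\Psi_u\rangle$ and the metric coefficient $\langle\Psi_v,\Psi_v\rangle=\cosh(u)^2$. Once I establish $\langle{\bf n}(t),V\rangle=-v'\cosh(u)$, the claimed equation $\kappa=\dfrac{\langle{\bf n}(t),V\rangle}{d(t)+\lambda}$ follows immediately by dividing by $d(t)+\lambda=u+\lambda$ and comparing with \eqref{c51}.

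The main obstacle I anticipate is bookkeeping rather than conceptual: getting the signs and the $\epsilon$-factors correct in the Lorentzian setting, since the cross product and the norm $|\gamma'|=\sqrt{\epsilon(v'^2\cosh(u)^2-u'^2)}$ both carry causal-character dependence, and one must verify that these cancel so that the final identity holds uniformly for spacelike and timelike $\gamma$. I would double-check this by confirming that the sign convention is consistent with the analogous computation $\langle{\bf n}(t),Z\rangle=-v'\cosh(u)^2/|\gamma'|$ in Theorem~\ref{t12}; the difference here is precisely one factor of $\cosh(u)$ versus $\cosh(u)^2$, reflecting that $V=\Psi_u$ is a unit tangent field on $\s_1^2$ whereas $Z=\partial_z$ is an ambient coordinate field, and this discrepancy is exactly what makes the denominators $u+\lambda$ (intrinsic) rather than $\sinh(u)+\lambda$ (extrinsic) appear correctly.
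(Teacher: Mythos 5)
Your approach is exactly the paper's: invoke Theorem \ref{t1}, use $d(t)=u(t)$, and compute $\langle{\bf n},V\rangle$ from ${\bf n}=-\gamma\times\gamma'/|\gamma'|$ by expanding $\gamma'=u'\Psi_u+v'\Psi_v$ in a determinant. However, there is one concrete slip you need to fix before the argument closes: the denominators do \emph{not} ``match automatically,'' and the target identity is not $\langle{\bf n}(t),V\rangle=-v'\cosh(u)$ but
\begin{equation*}
\langle{\bf n}(t),V\rangle=-\frac{v'\cosh(u)}{|\gamma'|},
\end{equation*}
since \eqref{c51} carries the factor $|\gamma'|$ in its denominator while the right-hand side of the theorem has only $d(t)+\lambda=u+\lambda$; the curve is not assumed to be parametrized by arc length, so you cannot drop $|\gamma'|$. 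If you literally proved your stated identity and divided by $u+\lambda$, you would not recover \eqref{c51}. Fortunately, the computation you set up produces the correct factor on its own: $\langle{\bf n},V\rangle=-\det(\gamma,\gamma',V)/|\gamma'|=-v'\det(\Psi,\Psi_v,\Psi_u)/|\gamma'|=-v'\cosh(u)/|\gamma'|$, because $\det(\Psi,\Psi_v,\Psi_u)=\cosh(u)$ (your heuristic attribution of this factor to $\langle\Psi_u,\Psi_u\rangle$ and $\langle\Psi_v,\Psi_v\rangle$ is also off, as $\langle\Psi_u,\Psi_u\rangle=-1$; the $\cosh(u)$ is just the value of the triple product). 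With the corrected identity, dividing by $u+\lambda$ gives precisely \eqref{c51}, and this is the paper's proof verbatim.
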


\begin{proof}
Using that  ${\bf n}= - \gamma\times\gamma'/|\gamma'|$, from $\gamma(t)=\Psi(u(t),v(t))$ we have $\gamma'=u'\Psi_u+v'\Psi_v$. Thus 
$$\langle{\bf n}(t),V\rangle=-\frac{\mbox{det}(\gamma,\gamma',V)}{|\gamma'|}=-\frac{v'}{|\gamma'|}\mbox{det}(\Psi,\Psi_v,\Psi_u)=-\frac{v'\cosh(u)}{|\gamma'|}.$$
This expression together \eqref{c51} gives the result.
\end{proof}

Finally, notice that the surface of revolution $S_\gamma^{xy}$ of spherical type obtained by rotating an intrinsic catenary $\gamma$ about $L$ is not minimal. Its mean curvature is given from \eqref{h1}, which together \eqref{c51} gives for $\lambda=0$,
\begin{equation*}
\begin{split}
H&=-\frac{\delta}{2|\gamma'|\sinh(u)}\left(\cosh(u)^2+\kappa\sinh(u)|\gamma'|\right)\\
&=-\frac{\delta\cosh(u)}{2|\gamma'|\sinh(u)}\left(\cosh(u)-\frac{v'\sinh(u)}{u}\right).
\end{split}
\end{equation*}
\section*{Acknowledgements}
Rafael  L\'opez  is a member of the Institute of Mathematics  of the University of Granada. This work  has been partially supported by  the Projects  I+D+i PID2020-117868GB-I00, supported by MCIN/ AEI/10.13039/501100011033/,  A-FQM-139-UGR18 and P18-FR-4049. 

\section*{Data Availability Statement} 

My manuscript has no associate data. No funding was received for conducting this study. The authors have no competing interests to declare that are relevant to the content of this article.


\begin{thebibliography}{777}


\bibitem{ak} K. Akutagawa,   On spacelike hypersurfaces with constant mean curvature in the de Sitter space. Math. Z. 196 (1987),  13--19.

\bibitem{br} A. Brasil Jr., A. G. Colares,   O. Palmas, Complete spacelike hypersurfaces with constant mean curvature in the de Sitter space: a gap theorem. Illinois J. Math. 47 (2003),   847--866.

\bibitem{de} B. B. Demirci,  
On rotational surfaces in 3-dimensional de Sitter space with Weingarten condition.  
Mediterr. J. Math. 19 (2022),   Paper No. 138, 17 pp. 

\bibitem{cd} M. do Carmo, M. Dajczer. 
Rotation hypersurfaces in spaces of constant curvature. 
Trans. Amer. Math. Soc. 277 (1983),  685--709. 
 
 

\bibitem{li1} H. Liu, Maximal and minimal rotation surfaces in 3-dimensional de Sitter space and their global stability.  Atti Sem. Mat. Fis. Univ. Modena XLII (1994), 319--327.

\bibitem{li2} H. Liu, Rotational surfaces of finite type in 3-dimensional de Sitter space.  
Algebras Groups Geom. 13 (1996),  359--369. 


\bibitem{ll1} H. Liu, G. Liu,  Hyperbolic rotation surfaces of constant mean curvature in 3-de Sitter space. Bull. Belg. Math. Soc. Simon Stevin, 7 (2000), 455--466.

\bibitem{ll2} H. Liu, G. Liu, Weingarten rotation surfaces in 3-dimensional de Sitter space. J. Geom. 79 (2004),  156--168.

\bibitem{mo} H. Mori, Rotational surfaces in a pseudo-Riemannian $3$-sphere. Tohoku Math. J. 38 (1986), 29--36.

\bibitem{ya} S-D. Yang, Bj\"{o}rling formula for mean curvature one surfaces in hyperbolic three-space and in de Sitter three-space. Bull. Korean Math. Soc. 54  (2017), 159--175.
\end{thebibliography}
\end{document}